\newtheorem{theorem}{Theorem}
\newtheorem{lemma}[theorem]{Lemma}
\newtheorem{proposition}[theorem]{Proposition}
\newtheorem{cor}[theorem]{Corollary}
\theoremstyle{definition}
\newtheorem{definition}[theorem]{Definition}
\newtheorem{example}[theorem]{Example}
\theoremstyle{remark}
\newtheorem{remark}[theorem]{Remark}
\newcommand{\bC}{\mathbb C}
\newcommand{\bF}{\mathbb F}
\newcommand{\bP}{\mathbb P}
\newcommand{\bQ}{\mathbb Q}
\newcommand{\bZ}{\mathbb Z}
\newcommand{\GL}{{\rm GL}}
\newcommand{\Gal}{{\operatorname{Gal}}}
\newcommand{\Pic}{{\operatorname{Pic}}}
\newcommand{\Fbar}{\overline{\mathbb{F}}}
\newcommand{\Xbar}{\overline{X}}
\newcommand{\Fr}{{\rm{Fr}}}
\newcommand{\Tr}{{\rm{Tr}}}
\newcommand{\et}{\textup{\'et}}
\newcommand{\hideqed}{\renewcommand{\qed}{}} 
\DeclareMathOperator{\Proj}{{Proj}}
\begin{document}

\title[Unirationality of del Pezzo surfaces of degree two]{On the Unirationality of \\ del Pezzo surfaces of degree two}
\author{Cec\'ilia Salgado}
\address{Instituto de Matem\'atica, Universidade Federal do Rio de Janeiro, Ilha do Fund\~ao, 21941-909, Rio de Janeiro, Brasil}
\email{salgado@im.ufrj.br}
\urladdr{http://www.im.ufrj.br/\~{}salgado}

\author{Damiano Testa}
\address{Mathematics Institute, University of Warwick, Conventry, CV4 7AL, United Kingdom}
\email{adomani@gmail.com}
\urladdr{http://www.warwick.ac.uk/\~{}maskal/zone}

\author{Anthony V\'arilly-Alvarado}
\address{Department of Mathematics, Rice University, Houston, TX 77005, USA}
\email{varilly@rice.edu}
\urladdr{http://www.math.rice.edu/\~{}av15}

\thanks{The first author was partially supported by the stipendium Hendrik Casimir Karl Ziegler from the KNAW. The third author was partially supported by National Science Foundation Grant 1103659.}

\begin{abstract}
Among geometrically rational surfaces, del Pezzo surfaces of degree two over a field $k$ containing at least one point are arguably the simplest that are not known to be unirational over $k$.  Looking for $k$-rational curves on these surfaces, we extend some earlier work of Manin on this subject. We then focus on the case where $k$ is a finite field, where we show that all except possibly three explicit  del Pezzo surfaces of degree two are unirational over $k$.
\end{abstract}

\maketitle

\section{Introduction}

A variety $X$ over a field $k$ is \emph{rational} if it is birational over $k$ to projective space $\bP^m$; it is \emph{unirational} if there is a dominant map $\bP^m \dasharrow X$ defined over $k$.  Clearly, rational varieties are unirational, and the converse is called the L\"uroth problem.  For varieties of dimension one, a curve is unirational if and only if it is rational, over any field.  For varieties of dimension two, the same holds over algebraically closed fields of characteristic zero.  Artin~\cite{A} showed that there exist unirational K3 surfaces over algebraically closed fields of characteristic two. Shortly thereafter, Shioda~\cite{Shioda} proved that the Fermat quartic surface over a field of characteristic $p \equiv 3 \bmod 4$ is unirational; being a K3 surface, this surface is not rational.  In the early 1970s, Artin and Mumford~\cite{AM}, Clemens and Griffiths~\cite{CG}, and Iskovskikh and Manin~\cite{IM} gave three different constructions of threefolds over the complex numbers that are unirational but not rational.

Let $X$ be a smooth projective variety.  We are interested in criteria for unirationality for the variety $X$.  Let us assume that $X$ has a rational point; otherwise $X$ is clearly not unirational.  With this assumption, it is not known if \emph{geometrically} rational surfaces (that is, surfaces $X$ such that $\overline{X} = X\times_k \overline{k}$ is rational over an algebraic closure $\overline{k}$ of $k$) are unirational over their field of definition. Unirationality is a birational property, and a theorem of Iskovskikh~\cite{I} guarantees that every smooth projective geometrically rational surface is birational over the ground field to either a del Pezzo surface or a conic bundle.  Work of Segre, Manin and Koll\'ar shows that del Pezzo surfaces of degree $d \geq 3$ over any field $k$ are unirational, provided $X(k) \neq \emptyset$.

In~\cite{M}*{Theorem~29.4}, Manin proves that many del Pezzo surfaces of degree two are unirational.  Given a rational point $p$ that avoids an explicit divisor of $X$, Manin produces a rational curve $C$ through $p$; repeating the construction on the points of $C$, he produces a unirational parametrization of $X$.  The goal of this paper is to extend Manin's result on these surfaces, as well as to clarify an oversight in the explicit divisor that must be avoided in Manin's original work (see Corollary~\ref{fima}).

We give conditions to detect rational curves on del Pezzo surfaces of degree two, and thus prove unirationality of these surfaces. For example, we show that if $X$ is a del Pezzo surface of degree two over a field $k$, and if $X$ contains eight points whose images under the morphism defined by the anticanonical linear system $\kappa\colon X \to \bP^2$ are distinct and avoid the branch locus, then there is a rational curve over $k$ passing through one of these eight points, which implies $X$ is unirational (see Lemma~\ref{otto}). These sufficient conditions, together with some analysis of the Galois representation $\Gal(\overline{k}/k) \to W(E_7)$ associated to $X$, and a few auxiliary geometric lemmas, allow us to prove our main result.

Recall that all del Pezzo surfaces of degree two are smooth quartic surfaces in the weighted projective space $\bP_k(1,1,1,2) := \Proj\left(k[x,y,z,w]\right)$.

\begin{theorem} \label{thm:main}
Let $X$ be a del Pezzo surface of degree two over a finite field $\mathbb{F}$.  The surface $X$ is unirational except possibly in the following cases 
\[
\begin{array}{lrcl}
X_1/\bF_3 \colon & -w^2 & = & (x^2 + y^2)^2 + y^3z - yz^3, \\[5pt]
X_2/\bF_3 \colon & -w^2 & = & x^4 + y^3z - yz^3 , \\[5pt]
X_3/\bF_9 \colon & \nu w^2 & = & x^4 + y^4 + z^4,\quad \textup{where } \nu \in \bF_9 {\textup{ is a non-square.}}
\end{array}
\]
\end{theorem}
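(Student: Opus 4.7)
The strategy is to combine the sufficient criteria for unirationality established earlier in the paper with point-counting over finite fields, in the usual pattern: a numerical threshold will reduce the statement to a finite case check. The cleanest of the earlier criteria is Lemma~\ref{otto}, which asserts that if $X(\mathbb{F})$ contains eight points whose images under the anticanonical double cover $\kappa\colon X \to \bP^2$ are distinct and avoid the branch quartic $B \subset \bP^2$, then $X$ is unirational. My goal is therefore to produce such an eight-point configuration, and to fall back on the sharper geometric criteria (existence of a conic tangent to $B$ at a suitable orbit of points, of a rational exceptional curve, etc.) when the direct count fails.

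For the counting step, set $q = |\mathbb{F}|$ and apply the Lefschetz trace formula to obtain
\begin{equation*}
|X(\mathbb{F})| = q^2 + (1+t)\, q + 1, \qquad t := \Tr\bigl(\Fr \mid K_X^\perp \otimes \bQ \bigr),
\end{equation*}
where $t$ is an integer in $[-7,7]$ because $\Fr$ acts on the $E_7$-sublattice $K_X^\perp \subset \nsxb$ through a finite quotient of $W(E_7)$. The branch curve $B$ is a smooth plane quartic of genus three, so Hasse--Weil gives $|B(\mathbb{F})| \leq q+1+6\sqrt{q}$. The set of points $p \in \bP^2(\mathbb{F})\setminus B$ with $\kappa^{-1}(p) \subset X(\mathbb{F})$ has cardinality
\begin{equation*}
\tfrac{1}{2}\bigl(|X(\mathbb{F})| - |B(\mathbb{F})|\bigr) \;\geq\; \tfrac{1}{2}\bigl(q^2 - 7q - 6\sqrt{q}\bigr),
\end{equation*}
which exceeds $8$ for every $q \geq 11$. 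Lifting any eight such points to $X$ and invoking Lemma~\ref{otto} therefore proves the theorem whenever $|\mathbb{F}| \geq 11$.

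The residual fields are $\mathbb{F} = \bF_q$ with $q \in \{2,3,4,5,7,8,9\}$. Here I would stratify by the conjugacy class of $\Fr$ in $W(E_7)$; this class determines both $t$ (hence $|X(\mathbb{F})|$) and the Galois action on the $56$ exceptional curves of $\overline{X}$ (equivalently, on the $28$ bitangents of $B$ with their tangency data). For each class one argues as follows: if $t$ is large enough that the counting estimate above already yields eight good points, one is done; otherwise one reads off the Galois orbits of exceptional curves and of bitangents, and checks directly that one of the finer criteria proved earlier applies, producing a rational curve through some $\mathbb{F}$-point of $X$. A short list of isomorphism classes of surfaces $X$ resists this sieve; the final step is to match these, up to $\mathbb{F}$-isomorphism, with explicit quartic equations in $\bP(1,1,1,2)$, and to verify that only $X_1/\bF_3$, $X_2/\bF_3$, and $X_3/\bF_9$ actually survive.

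The principal obstacle I anticipate is the combinatorial volume of this small-$q$ case analysis, especially over $\bF_2$ and $\bF_3$, where the sixty conjugacy classes of $W(E_7)$ interact with the scarcity of rational points on $B$ to produce many borderline configurations in which the various unirationality criteria are individually inapplicable. Making sure that no criterion has been overlooked, and that the list of exceptions is complete, essentially forces a computer-assisted enumeration running through representatives of each Weyl class together with the possible twists of the branch quartic.
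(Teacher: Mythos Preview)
Your outline is sound and matches the paper's strategy in spirit---point-count via Lefschetz, bound the ramification locus, invoke Lemma~\ref{otto}, and mop up small $q$ by hand/machine---but the paper executes the reduction much more sharply, and this is worth knowing.

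Two refinements cut the threshold from your $q\geq 11$ down to $q\geq 7$. First, the paper replaces Hasse--Weil on the branch quartic by the St\"ohr--Voloch bound $|B(\mathbb{F})|\leq 2q+6$ (Lemma~\ref{stvo}), which is tighter for small $q$ and already isolates the Fermat quartic over $\mathbb{F}_9$ as the unique exception to that bound. Second, and more decisively, the paper does not simply feed the worst-case trace $-6$ into the estimate. Theorem~\ref{7fuori} observes that once you have \emph{any} point $p$ off the ramification divisor, either Corollary~\ref{fima} applies directly, or $p$ is a generalized Eckardt point; in the latter case either an exceptional curve or a conic bundle is rational (and Proposition~\ref{conifi} finishes), or Frobenius permutes the four lines through $p$ transitively, forcing its order to be divisible by $4$. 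Lemma~\ref{traces} then pins the trace to $\geq -2$ rather than $\geq -6$, and the Eight Points Lemma goes through for $q\geq 7$. The residual work for $q=7,8,9$ (Theorem~\ref{amano}, Lemma~\ref{mas8}) is then short and conceptual, and the computer enters only for $q\in\{2,3,4,5\}$.

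Your proposal to stratify the small-$q$ cases by the $60$ conjugacy classes of $W(E_7)$ would work in principle, but the paper's computation is organised quite differently: it enumerates normalised equations in $\bP(1,1,1,2)$ (using Shioda's normal forms for quartics with a bitangency point, and the normal forms of the Appendix for generalized Eckardt points in characteristic~$2$), filters by the geometric criteria (Corollary~\ref{fima}, the spine construction, Lemma~\ref{altra}), discards non-minimal surfaces, and reduces modulo isomorphism. One small slip in your write-up: in characteristic~$2$ the branch curve is a (possibly degenerate) conic, not a smooth genus-three quartic, so Hasse--Weil as you stated it does not apply there; the paper uses this directly to handle $q=8$ and to structure the $q=2,4$ search.
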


\begin{remark}
The three exceptional surfaces in Theorem~\ref{thm:main} are minimal over their field of definition; in fact $\Pic(X) \cong \bZ$, generated by the class of an anticanonical divisor ${-K}_X$. Hence, the only places to look for curves defined over $k$ are the linear systems $|{-nK_X}|$ for $n \in \mathbb{N}$. Exhaustive computer searches show that the linear systems $|{-nK}_X|$ contain no geometrically integral curves of geometric genus zero for $n \leq 3$ for the surfaces $X_1$ and $X_2$ and for $n\leq 2$ for the surface $X_3$.  We also note that these surfaces have only a few points, and these points all lie on the ramification divisor of $\kappa$: the surface $X_1$ has one point, $X_2$ has four points, and $X_3$ has $28$ points.   Up to isomorphism, the surface $X_1$ is the unique del Pezzo surface of degree two over a finite field containing exactly one point~\cite{Shuijing}.
\end{remark}

We obtain the following amusing corollary, which for infinite fields already follows easily from Manin's work.

\begin{cor}\label{quadratica}
Let $X$ be a del Pezzo surface of degree two defined over a field $k$. Then there is a quadratic extension $k'/k$ such that $X \times_{k} k'$ is unirational. 
\end{cor}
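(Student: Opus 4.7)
The plan is to split into two cases according as the ground field $k$ is finite or infinite, and invoke different machinery in each.

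First suppose $k$ is infinite. I would exploit the anticanonical double cover $\kappa \colon X \to \bP^2$, which is branched along a smooth plane quartic $B$. By Manin's Theorem~29.4 in \cite{M} (recalled in the introduction), there is a proper closed subset $Z \subsetneq X$ with the property that any rational point of $X$ lying outside $Z$ (over any extension of $k$) produces a unirational parametrization of $X$ via iterating the tangent construction. Since $k$ is infinite, the complement $\bP^2(k) \setminus (\kappa(Z) \cup B)$ is nonempty; pick a point $P$ there. The fiber $\kappa^{-1}(P)$ is cut out in $\bP(1,1,1,2)$ by an equation of the form $w^2 = c$ with $c \in k^\times$, so it has a rational point over $k' := k(\sqrt{c})$, a field of degree at most $2$ over $k$. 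This point lies off $Z$, so Manin's theorem gives the unirationality of $X \times_k k'$. If $c$ happens to be a square in $k$, replace $k'$ with any separable quadratic extension of $k$; unirationality is preserved under base change.

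Now suppose $k$ is finite. By Theorem~\ref{thm:main}, either $X$ is unirational over $k$ --- in which case any quadratic extension $k'/k$ works, and such an extension always exists over a finite field --- or $X$ is isomorphic to one of $X_1/\bF_3$, $X_2/\bF_3$, or $X_3/\bF_9$. For each exceptional surface I would exhibit a quadratic extension that falls outside the exceptional list, so that Theorem~\ref{thm:main} can be re-applied. For $X_3$, take $k' = \bF_{81}$: every element of $\bF_9^\times$ is a square in $\bF_{81}^\times$, so $\nu$ becomes a square and the equation rescales to $w^2 = x^4+y^4+z^4$ over $\bF_{81}$, which is not on the list. For $X_1$ or $X_2$, take $k' = \bF_9$: the only exception over $\bF_9$ is $X_3$, so it suffices to check that neither $X_i \times_{\bF_3} \bF_9$ is $\bF_9$-isomorphic to $X_3$, which can be verified by comparing the defining branch quartics up to the action of $\Aut(\bP^2_{\bF_9})$, or more crudely by a point count or a Picard rank calculation over $\bF_9$.

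The main obstacle lies in the finite-field case: one must invoke the deep Theorem~\ref{thm:main} and separately verify that each exceptional surface becomes non-exceptional after some quadratic base change. For $X_3$ this is immediate from the squares-become-squares phenomenon over $\bF_9 \subset \bF_{81}$; for $X_1$ and $X_2$ it requires a small but genuine computation to distinguish their branch curves from the Fermat quartic over $\bF_9$. The infinite-field case, by contrast, is essentially formal once Manin's theorem is in hand, resting only on the observation that any $k$-point of $\bP^2$ has $\kappa$-preimage defined over an at most quadratic extension of $k$.
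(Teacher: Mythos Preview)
Your infinite-field argument is essentially the paper's: both pick a point of $\bP^2(k)$ off the branch curve and off the images of generalized Eckardt points (possible since $k$ is infinite), lift it to an at-most-quadratic extension, and invoke Corollary~\ref{fima}.

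For the finite-field case your route diverges from the paper's. The paper does \emph{not} invoke Theorem~\ref{thm:main}. Instead it first shows, by an elementary argument (using a tangent line to $B$ in odd characteristic, and the fact that $B_{\rm red}$ is a conic in characteristic two), that there is a point $P \in \bP^2(k) \setminus B$. Over the quadratic extension $k'$, this $P$ lifts to a point of $X_{k'}$ outside the ramification divisor; since $|k'| = |k|^2 \geq 9$ whenever $|k| \geq 3$, Theorem~\ref{7fuori} applies directly. The one remaining case $k = \bF_2$ is handled separately: the ramification curve acquires a geometrically integral rational component over $\bF_4$, and Theorem~\ref{unirat} (with $\bF_4$ perfect) finishes.

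The paper's route buys two things. First, it is computer-free: Theorems~\ref{7fuori} and~\ref{unirat} are proved without machine assistance, whereas Theorem~\ref{thm:main} relies on exhaustive enumeration over $\bF_2,\bF_3,\bF_4,\bF_5$. Second, it sidesteps entirely the verification you flag as an obstacle---that $X_1 \times_{\bF_3} \bF_9$ and $X_2 \times_{\bF_3} \bF_9$ are not $\bF_9$-isomorphic to $X_3$. You correctly identify this as needing a genuine computation, but you do not carry it out; as written this is a gap. Your handling of $X_3$ over $\bF_{81}$ is fine, but for $X_1,X_2$ the cleaner fix is precisely the paper's observation: over $\bF_9$ you already have a point off the ramification divisor (the lift of any $P\in\bP^2(\bF_3)\setminus B$), so Theorem~\ref{7fuori} applies and no comparison with $X_3$ is needed.
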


The paper is organized as follows.  Section~\ref{geo} provides background on del Pezzo surfaces, dealing mostly with curves of small degree and their configurations.  Section~\ref{sec:uni} contains the main unirationality results over arbitrary fields.  Section~\ref{sec:unifin} specializes to the case of del Pezzo surfaces of degree two over finite fields.  In the Appendix we give normal forms for del Pezzo surfaces of degree two containing a generalized Eckardt point over fields of characteristic two.

\subsection*{Acknowledgements}
We thank Brendan Hassett, Marc Hindry, Samir Siksek, Ronald van Luijk, Olivier Wittenberg for useful conversations.  We would also like to thank the Mathematisch Instituut in Leiden, the Max Planck Institute in Bonn, the Mathematics Department of Rice University, and CIRM for their support and hospitality at various stages of this project.

\section{Geometry of del Pezzo surfaces of degree two} \label{geo}
Let $X$ be a del Pezzo surface defined over a field $k$.  Denote by $K_X$ a canonical divisor on $X$ and by $\kappa \colon X \to \mathbb{P} \bigl( {\rm H}^0(X,\mathcal{O}_X({-K_X}))^\vee \bigr) \simeq \mathbb{P}^2$ the morphism induced by the anticanonical divisor class on $X$.  The morphism $\kappa$ is finite and separable of degree two; let $R \subset X$ be the ramification divisor and let $B \subset \mathbb{P}^2$ be the branch divisor of $\kappa$.  Since the degree of $\kappa$ is two, there is an involution $\varphi \colon X \to X$ commuting with $\kappa$, called the {\em{Geiser involution}}.  The curve $B$ is a plane quartic, and it is smooth if the characteristic of the field $k$ is not two; otherwise, the curve $B$ is a double conic, and the conic itself may be singular, or even a double line.  From now on, we will simply mention the ramification divisor or the branch divisor omitting reference to the anticanonical morphism.

\subsection{Bitangent lines}
A {\em{bitangent line}} to the branch curve $B$ is a line $\ell$ in $\mathbb{P}^2$ such that $\kappa^{-1}(\ell)$ is reducible.  If the characteristic of the field $k$ is not two, then a line whose intersection multiplicity with $B$ is even everywhere is a bitangent line.  The following lemma analyzes the reducible elements of the linear system $|{-K_X}|$, showing that they correspond to exactly the bitangent lines.

\begin{lemma} \label{lem:comp}
Let $X$ be a del Pezzo surface of degree two.  The non-integral elements of the linear system $|{-K_X}|$ are of the form $C_1 \cup C_2$, where $C_1,C_2$ are exceptional curves satisfying $C_1 \cdot C_2 = 2$.  These non-integral elements are exactly the inverse images of the bitangent lines to $B$.
\end{lemma}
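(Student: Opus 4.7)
The plan rests on the observation that $\kappa$ is defined by the complete linear system $|{-K_X}|$, so $\kappa^* \mathcal{O}_{\mathbb{P}^2}(1) = \mathcal{O}_X({-K_X})$ and pullback gives a bijection between lines $\ell \subset \mathbb{P}^2$ and effective divisors in $|{-K_X}|$. By the definition of a bitangent line given just above the lemma, $\kappa^*(\ell)$ is non-integral precisely when $\ell$ is bitangent to $B$. This already takes care of the ``exactly the inverse images of bitangent lines'' half of the statement.

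For the structural claim, assume $D = \kappa^*(\ell)$ is non-integral. I would first rule out non-reduced possibilities: since $-K_X$ is ample and $(-K_X)^2 = 2$, any component of $D$ appearing with multiplicity at least two would have to exhaust $D$, giving $D = 2C$ with $4 C^2 = 2$, which is impossible. Hence $D = C_1 + C_2$ with $C_1 \neq C_2$ irreducible. Because $\deg \kappa = 2$, each restriction $\kappa|_{C_i} \colon C_i \to \ell$ is birational, and intersecting the relation $-K_X = C_1 + C_2$ with each $C_i$ gives $-K_X \cdot C_i = 1$. Squaring the same relation and using these pairings yields $C_1^2 = C_2^2$ and $C_1 \cdot C_2 = 1 - C_1^2$.

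To pin down $C_i^2 = -1$ I would combine two constraints. Adjunction $2 p_a(C_i) - 2 = C_i^2 + K_X \cdot C_i = C_i^2 - 1$ forces $C_i^2$ to be an odd integer at least $-1$. On the other hand, because $\kappa$ is genuinely ramified, its ramification divisor meets $\kappa^*(\ell)$ nontrivially, so the two components $C_1$ and $C_2$ are forced to share at least one point over $\ell \cap B$; this gives $C_1 \cdot C_2 \geq 1$ and hence $C_i^2 \leq 0$. Combining both constraints forces $C_i^2 = -1$, so each $C_i$ is a $(-1)$-curve (an exceptional curve in the paper's sense), and $C_1 \cdot C_2 = 1 - (-1) = 2$.

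The step demanding the most care is the ``components must meet'' assertion, which is where the geometry of the double cover enters substantively and where characteristic two requires attention because $B$ is then a double conic and the ramification of $\kappa$ is wilder than in odd characteristic. A uniform argument is to work scheme-theoretically on $\ell$: over any closed point $p$ of $\ell$ where $\kappa$ ramifies, the set-theoretic fiber $\kappa^{-1}(p)$ consists of a single point, through which both components of the reducible divisor $\kappa^*(\ell)$ must necessarily pass.
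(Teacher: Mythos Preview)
Your argument is correct, and it diverges from the paper's at exactly one point: the inequality $C_i^2 \leq 0$. The paper obtains this via the Hodge index theorem, observing that $D_i := 2C_i + K_X$ is orthogonal to the ample class $-K_X$ and hence has $D_i^2 = 4C_i^2 - 2 \leq 0$; adjunction then excludes $C_i^2 = 0$, and the computation $(-K_X)^2 = (C_1 + C_2)^2$ yields $C_1 \cdot C_2 = 2$ at the end. You instead argue geometrically that $C_1$ and $C_2$ must meet over any point of $\ell$ lying in the branch locus, giving $C_1 \cdot C_2 \geq 1$ directly; combined with your relation $C_1 \cdot C_2 = 1 - C_1^2$ and the adjunction parity constraint, this pins down $C_i^2 = -1$. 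Both routes are short. The Hodge index approach is entirely lattice-theoretic and indifferent to the characteristic, which is why the paper does not need to pause over characteristic two; your approach trades that uniformity for a concrete picture of why the two exceptional curves collide, and your handling of the characteristic-two case (where the branch divisor is a double conic but the set-theoretic fibre over any point of its reduced support is still a single point) is adequate.

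One small wording quibble: the sentence ``intersecting the relation $-K_X = C_1 + C_2$ with each $C_i$ gives $-K_X \cdot C_i = 1$'' does not literally do what it says, since intersecting with $C_i$ gives $C_i^2 + C_1 \cdot C_2 = -K_X \cdot C_i$, not the value $1$. What actually yields $-K_X \cdot C_i = 1$ is either your observation that $\kappa|_{C_i}$ is birational (via the projection formula) or, more simply, ampleness of $-K_X$ together with $(-K_X) \cdot (C_1 + C_2) = 2$. The subsequent deductions are fine once this is in place.
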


\begin{proof}
Let $C$ be a non-integral element of the linear system $|{-K_X}|$.  Since the divisor ${-K_X}$ is ample, it has intersection number at least one with each integral component of $C$, and since the equality $-K_X \cdot C = 2$ holds, it follows that $C=C_1+C_2$, where $C_1,C_2$ are integral curves with $-K_X \cdot C_1 = -K_X \cdot C_2 = 1$.  We deduce that for $i = 1, 2$, the divisor $D := 2C_i + K_X$ is orthogonal to $K_X$; by the Hodge index theorem it follows that $(D_i)^2 = 4(C_i)^2-2 \leq 0$. This shows that $(C_i)^2 \leq 0$. The adjunction formula shows that $(C_i)^2 = 0$ is not possible, so $(C_i)^2 < 0$.  We conclude that $C_1$ and $C_2$ are exceptional curves.  Moreover, we have $2 = (-K_X)^2 = (C_1+C_2)^2 = 2(C_1 \cdot C_2 - 1)$, so that $C_1 \cdot C_2 = 2$, as required.
\end{proof}

\subsection{Generalized Eckardt points}\label{ss:Eckardt}
We begin this subsection with a result that is certainly well-known.

\begin{lemma} \label{lem:quattro}
Let $X$ be a del Pezzo surface of degree two.  Through any point of $X$ there are at most four exceptional curves.
\end{lemma}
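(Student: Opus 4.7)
The plan is to obtain the bound by applying the Hodge index theorem to the sum of the exceptional curves passing through the given point, exploiting the fact that $-K_X$ is ample with $(-K_X)^2 = 2$ on a del Pezzo surface of degree two.

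First, I would let $E_1, \ldots, E_n$ denote distinct exceptional curves on $X$ all passing through a common point $p$, and set $D = E_1 + \cdots + E_n$. Each $E_i$ satisfies $E_i^2 = -1$ and $-K_X \cdot E_i = 1$. Moreover, since the $E_i$ are distinct integral curves sharing the point $p$, their local intersection multiplicity at $p$ is positive, so $E_i \cdot E_j \geq 1$ whenever $i \neq j$. These observations yield
\[
-K_X \cdot D \;=\; n, \qquad D^2 \;=\; -n \,+\, 2\!\!\sum_{1\leq i<j\leq n}\!\! E_i \cdot E_j \;\geq\; n(n-2).
\]

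Next, I would consider the $\bQ$-class $D' := D - \tfrac{n}{2}(-K_X)$ in $\Picq(X)$, which by construction is orthogonal to $-K_X$. Since $-K_X$ is ample, the Hodge index theorem gives $(D')^2 \leq 0$; expanding the square produces $D^2 \leq n^2/2$. Combining this with the lower bound for $D^2$ above yields $n(n-2) \leq n^2/2$, which rearranges to $n \leq 4$.

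I do not foresee any real obstacle: the argument is a direct Hodge-index calculation, whose only nontrivial ingredient is the positivity $E_i \cdot E_j \geq 1$ for distinct integral curves meeting at a common point. It is worth noting that the bound should be sharp, with the extremal configurations corresponding to the generalized Eckardt points treated in Section~\ref{ss:Eckardt}.
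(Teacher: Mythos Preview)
Your argument is correct. The Hodge index computation is clean: with $D = \sum E_i$ you have $-K_X \cdot D = n$, the orthogonal projection $D' = D - \tfrac{n}{2}(-K_X)$ satisfies $(D')^2 = D^2 - \tfrac{n^2}{2} \leq 0$, and the lower bound $D^2 \geq n(n-2)$ coming from $E_i \cdot E_j \geq 1$ for $i \neq j$ then forces $n \leq 4$. All ingredients are standard and valid in any characteristic.

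The paper itself does not prove this lemma; it simply points to~\cite{TVAV}*{proof of Lemma~4.1}. So there is no ``paper's own proof'' to compare with here. Your Hodge-index approach is a natural and self-contained way to obtain the bound, and it has the virtue of making the numerology transparent: equality $n=4$ forces $E_i \cdot E_j = 1$ for all $i \neq j$, which is exactly the intersection pattern of the four exceptional curves through a generalized Eckardt point, confirming sharpness as you note.
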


\begin{proof}
A proof can be found in~\cite{TVAV}*{proof of Lemma~4.1}.
\end{proof}

Taking a cue from the theory of cubic surfaces (which are del Pezzo surfaces of degree three), we make the following definition.

\begin{definition}
A \emph{generalized Eckardt point} is a point on a del Pezzo surface of degree two contained in four exceptional curves.  
\end{definition}

We give an upper bound for the number of generalized Eckardt points that can occur. Let $\mathscr{S}$ be a set of $28$ lines in $\mathbb{P}^2$. Each line in $\mathscr{S}$ contains at most $\frac{28-1}{3} = 9$ points through which there are at least four lines all contained in $\mathscr{S}$.  Thus there are at most $28 \cdot 9$ pairs $(\ell , p)$ consisting of a line $\ell$ in $\mathscr{S}$ together with a point $p$ in $\ell$ contained in four of the lines of $\mathscr{S}$; finally, there are at most $\frac{28 \cdot 9}{4} = 63$ points each contained in four lines of $\mathscr{S}$.  

Applying the above count to the set $\mathscr{S}$ of $28$ bitangent lines to the branch curve of a del Pezzo surface $X$ of degree two, we conclude that $X$ has at most $2 \cdot 63 = 126$ generalized Eckardt points. This upper bound is achieved by the surface with equation 
\[
X/\bF_9 \colon \quad
w^2 = x^4 + y^4 + z^4;
\]
the $126$ generalized Eckardt points on $X$ project via $\kappa$ to the $63$ points of $\bP(\bF_9)$ not contained in the branch curve.

\begin{example}
Assume that the characteristic of the ground field is not two.  We construct del Pezzo surfaces of degree two with a point $p$ lying on four exceptional curves.  Note that the point $p$ cannot be in the ramification divisor $R$, since there are at most two exceptional curves through any point of $R$.  Let $q_2,q_4$ be homogeneous polynomials of degrees two and four respectively and let $F(x,y,z)$ be the polynomial $F = x^4 + q_2(y,z) x^2 + q_4(y,z)$.  Let $B \subset \mathbb{P}^2$ be the plane quartic with equation $F=0$.  If $B$ is smooth, namely if the polynomial $q_2^2-4q_4$ has distinct roots, then the surface $S$ in the weighted projective space $\mathbb{P}(1,1,1,2)$ with equation 
\[
S \colon \quad w^2=F(x,y,z)
\]
is a del Pezzo surface of degree two.  The point $p = [1,0,0] \in \mathbb{P}^2$ is contained in the four bitangent lines to $B$ defined by the vanishing of the linear factors of the polynomial $q_2^2-4q_4$.  The eight exceptional curves lying above the four bitangent lines to the quartic $B$ through the point $p$ decompose into two sets of four, according to which of the two points $p_\pm = [1,0,0,\pm1]$ above $p$ they contain.  Thus the two points $p_\pm$ are both generalized Eckardt points.  

The surfaces we constructed above all have an involution given by $x \mapsto -x$.  In fact, it follows from~\cite{Dolgachev}*{Exercise~6.17} that every del Pezzo surface of degree two with a point contained in four exceptional curves has an involution and is of the form described in this example.
\end{example}

\subsection{Spines}

In this subsection, we show that if $p$ is a point of the ramification divisor $R$ of a del Pezzo surface $X$ of degree two, then there is a unique section of $|{-K}_X|$ through $p$ that is singular at $p$.  These sections can be a source of rational curves on $X$, and can thus help build unirational parametrizations of $X$.

\begin{lemma} \label{lem:Kliscio}
Let $X$ be a del Pezzo surface of degree two defined over a field $k$ and let $p$ be a rational point of $X$.  There is at most one element of the linear system $|{-K_X}|$ that is singular at $p$.  There is such a singular element if and only if $p$ is contained in the ramification divisor $R$ of the morphism $\kappa$.
\end{lemma}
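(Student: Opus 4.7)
The plan is to analyze the second-order evaluation map at $p$ on sections of $-K_X$ and to translate the question into a rank computation for the differential $d\kappa_p$ of the anticanonical morphism. Concretely, I would consider
\[
\mathrm{ev}_p \colon H^0\bigl(X,\cO_X(-K_X)\bigr) \longrightarrow \cO_X(-K_X)_p \otimes_{\cO_{X,p}} \cO_{X,p}/\mathfrak{m}_p^2.
\]
The source is a three-dimensional $k$-vector space because $X$ is a del Pezzo surface of degree two, and the target is a three-dimensional $k$-vector space because $p$ is a $k$-rational smooth point of $X$. A non-zero section $s \in H^0(X, \cO_X(-K_X))$ cuts out a divisor singular at $p$ precisely when $s \in \ker(\mathrm{ev}_p)$, so the lemma reduces to computing $\dim \ker(\mathrm{ev}_p)$ in the cases $p \in R$ and $p \notin R$.

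Next I would exploit the natural two-step filtration on the target: first evaluate a section at $p$, and on the kernel of that, extract the first-order Taylor coefficient, which lies in $T^*_pX \otimes (-K_X)_p$. Since $-K_X$ is base-point free, evaluation at $p$ is surjective and contributes one dimension to the image. Choosing a section $s_0$ with $s_0(p) \neq 0$ trivializes the line bundle locally, and for any other section $s$ the first-order contribution is $d(s/s_0)_p$. As $s$ varies over $H^0(X,\cO_X(-K_X))$, these differentials span the image of the cotangent map $d\kappa_p^* \colon T^*_{\kappa(p)}\bP^2 \to T^*_pX$, whose dimension equals $\mathrm{rank}(d\kappa_p)$. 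Hence
\[
\dim \mathrm{image}(\mathrm{ev}_p) \;=\; 1 + \mathrm{rank}(d\kappa_p).
\]

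Finally I would identify the rank of $d\kappa_p$. Since $\kappa$ is a finite separable morphism of degree two between smooth surfaces, it is étale exactly at points outside $R$; so if $p \notin R$ then the rank is $2$, $\mathrm{ev}_p$ is surjective, $\ker(\mathrm{ev}_p)=0$, and no element of $|{-K_X}|$ is singular at $p$. If $p \in R$ the rank is at most $1$, and I would rule out the rank-zero case by a length argument: the scheme-theoretic fiber of $\kappa$ over $\kappa(p)$ has total length $\deg \kappa = 2$ by separability, whereas $d\kappa_p = 0$ would force $\kappa^*\mathfrak{m}_{\kappa(p)} \subseteq \mathfrak{m}_p^2$ and hence a local fiber length at $p$ of at least $\dim_k \cO_{X,p}/\mathfrak{m}_p^2 = 3$, a contradiction. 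Thus the rank is exactly one, $\ker(\mathrm{ev}_p)$ is one-dimensional, and there is a unique element of $|{-K_X}|$ singular at $p$, proving both assertions. The only delicate point is the non-vanishing of $d\kappa_p$ at ramification, where the hypothesis that $\kappa$ is separable (as opposed to merely finite) is crucial, notably in characteristic two.
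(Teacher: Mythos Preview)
Your proof is correct and takes a genuinely different route from the paper's.

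The paper argues by an explicit local computation. It writes the local ring $\cO_{X,p}$ as a quadratic extension $\cO_{\bP^2,\kappa(p)}[z]/(z^2+cz+d)$, handles the characteristic-two and odd-characteristic cases separately to normalize $c$ and $z$, and then observes directly that a line $\ell$ through $\kappa(p)$ pulls back to something singular at $p$ if and only if $\ell$ is tangent to the curve $\{d=0\}$ at $\kappa(p)$. This not only proves the lemma but identifies the unique singular member explicitly, information the paper uses immediately afterwards to define the \emph{spine} and to note that in odd characteristic it lies over the tangent line to the branch curve.

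Your argument is more conceptual and uniform: you recast the question as computing the kernel of the second-order jet map $\mathrm{ev}_p$, reduce that to $\mathrm{rank}(d\kappa_p)$, and rule out $d\kappa_p=0$ by a fiber-length count. This avoids any case split on the characteristic and any choice of local coordinates. One small correction: the equality of fiber length with $\deg\kappa$ comes from flatness of $\kappa$ (a finite morphism between smooth varieties of the same dimension), not from separability; separability is what guarantees $\kappa$ is \'etale off $R$, which you use in the $p\notin R$ case. The trade-off is that your argument, while cleaner, does not single out \emph{which} line in $\bP^2$ gives the singular pullback, so if one later needs the spine concretely (as the paper does, e.g.\ in characteristic two in Theorem~\ref{unirat}), a supplementary identification is required.
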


\begin{proof}
Since the anticanonical linear system is base point free, the subsystem $L_p \subset |{-K}_X|$ consisting of divisors containing $p$ is a line.  We separate the argument into two cases, according to whether $p$ is contained or not in $R$.

Suppose that $p$ is not contained in $R$.  Then the morphism $\kappa$ is \'etale at $p$ and since the image of any element of $|{-K_X}|$ is a line and hence smooth, we deduce that any element of $L_p$ is non-singular at $p$.

Suppose that $p$ is contained in the ramification divisor $R$.  Let $\overline{x},\overline{y}$ be a local system of parameters on $\mathbb{P}^2$ near $\overline{p} := \kappa(p)$ and let $x,y$ respectively denote the pull-back to $X$ of $\overline{x},\overline{y}$.  The morphism $\kappa$ makes the local ring $\mathcal{O}_p$ of $X$ at $p$ into a module for the local ring $\mathcal{O}_{\overline{p}}$ of $\mathbb{P}^2$ at $\overline{p}$.  Since the morphism $\kappa$ is finite of degree two, the $\mathcal{O}_{\overline{p}}$-module $\mathcal{O}_p$ is generated by $1$ and any element $z$ not in the submodule generated by $1$.  Thus there are elements $c,d \in \mathcal{O}_{\overline{p}}$ such that $z^2+cz+d = 0$.  If the characteristic of $k$ is different from two, then, replacing $z$ by $z-c/2$ we reduce to the case in which $c$ vanishes identically.  If the characteristic of $k$ is equal to two, then replacing $z$ by $z-z(p)$ we reduce to the case in which $z(p)$ vanishes; in this case $c^2$ is an equation of the ramification divisor $R$ near $p$, and hence $c(p)$ vanishes; by our reductions, $d(p)$ vanishes as well.  In either case, it suffices to assume that $z$, $c$ and $d$ all vanish at $p$.  Since the terms $z^2$ and $cz$ in the equation of $X$ vanish to order at least two at $p$, and since $X$ is non-singular at $p$, we deduce at once that the vanishing set of $d$ contains $p$ and is non-singular at $p$.  Let $D \subset \mathbb{P}^2$ denote the (image under $\kappa$ of the) vanishing set of $d$, so that $D$ contains $\overline{p}$ as a non-singular point.  It is now immediate to check that a line $\ell$ in $\mathbb{P}^2$ through $\overline{p}$ determines an element of $L_p$, non-singular at $p$, if and only if $\ell$ is not tangent to $D$ at $\overline{p}$.  Thus there is exactly one element of $L_p$ singular at $p$ and the lemma follows.
\end{proof}

It follows from Lemma~\ref{lem:Kliscio} that for every point $p$ of the ramification divisor $R$ there is a well-defined one-dimensional subspace $\mathcal{S}_p$ of the tangent space to $\mathbb{P}^2$ at $\kappa(p)$ with the following property.  A line $\ell$ in $\mathbb{P}^2$ containing $\kappa(p)$ determines an element of $|{-K_X}|$ singular at $p$ if and only if the tangent space to $\ell$ at $\kappa(p)$ is $\mathcal{S}_p$.  In the case in which the characteristic of $k$ is different from two, it is easy to check that the space $\mathcal{S}_p$ is in fact the tangent space to the branch curve $B$ at the point~$\kappa(p)$.

\begin{definition}
Let $X$ be a del Pezzo surface of degree two and let $p$ be a point of $X$ contained in the ramification divisor.  We call the section of the linear system $|{-K_X}|$ through $p$ and singular at $p$ the {\em{spine}} of $X$ at $p$.
\end{definition}

We shall also need the following lemma, which is a companion to Lemma~\ref{lem:Kliscio}.

\begin{lemma} \label{cor:irr}
Let $X$ be a del Pezzo surface of degree two defined over a field $k$ and let $p$ be a rational point of $X$.  If $k$ contains at least four elements, then there exists an integral element of the linear system $|{-K_X}|$ containing $p$ as a non-singular point.
\end{lemma}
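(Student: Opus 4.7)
The linear system $|{-K}_X|$ has dimension $2$ and is base-point-free, since it defines the anticanonical morphism $\kappa \colon X \to \mathbb{P}^2$. Therefore the sub-linear system $L_p \subset |{-K}_X|$ of divisors containing $p$ is a pencil, and hence has exactly $|k|+1 \geq 5$ elements defined over $k$. Call an element of $L_p$ \emph{bad} if it is either non-integral or singular at $p$. The plan is to show that at most four elements of $L_p$ are bad, which forces the existence of a good one.

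First I would bound the non-integral elements. By Lemma~\ref{lem:comp}, each such element has the form $C+C'$ with $C, C'$ exceptional curves satisfying $C \cdot C' = 2$, so at least one of $C$ and $C'$ passes through $p$. Since the class of $C$ determines the class of its partner via $C + C' \sim {-K_X}$, distinct non-integral elements of $L_p$ correspond to distinct exceptional curves through $p$. By Lemma~\ref{lem:quattro}, there are at most four exceptional curves through $p$, so there are at most four non-integral elements in $L_p$. By Lemma~\ref{lem:Kliscio}, at most one element of $L_p$ is singular at $p$, and such an element exists only if $p$ lies on the ramification divisor $R$.

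If $p \notin R$, the bad elements are precisely the non-integral ones, and we are done. The delicate case is $p \in R$. Here the Geiser involution $\varphi$ fixes $p$ and maps each exceptional curve $C$ to its partner $C'$, because $\varphi$ preserves $\kappa$-fibers and $\kappa^{-1}$ of a bitangent is $C \cup C'$. Consequently exceptional curves through $p$ come in partner pairs, and any non-integral element $C+C' \in L_p$ then has both components through $p$, forcing it to be singular at $p$. Since Lemma~\ref{lem:Kliscio} allows at most one singular element in $L_p$, there is at most one non-integral element, and it coincides with the unique singular element; hence at most one bad element in total.

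The main subtlety of the argument is precisely this overlap in the ramified case: without recognizing that a non-integral element of $L_p$ at a ramification point must be singular at $p$, the naive count would yield $5$ bad elements rather than $4$, and the hypothesis $|k|\geq 4$ would fall short by exactly one element. Everything else is routine counting on the pencil $L_p$.
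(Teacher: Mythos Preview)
Your proof is correct and follows the same counting-on-the-pencil strategy as the paper, invoking the same three lemmas. In fact you are more careful than the paper's own proof: the paper only explicitly bounds the reducible members of $L_p$ by four and stops there, never separately addressing the ``non-singular at $p$'' clause, whereas you correctly observe that when $p\in R$ the Geiser involution forces any reducible member of $L_p$ to have \emph{both} components through $p$ and hence to coincide with the unique singular member---so the two kinds of bad elements collapse rather than adding up to five.
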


\begin{proof}
Let $L_p$ denote the linear subsystem of $|{-K_X}|$ consisting of the divisors containing $p$.  By Lemma~\ref{lem:comp}, each non-integral element of $L_p$ consists of a union of two exceptional curves, and at least one of these two curves contains $p$.  Since different elements of $L_p$ have no component in common, we conclude from Lemma~\ref{lem:quattro} that there are at most four reducible elements in $L_p$.  If the field has at least four elements, then the linear system $L_p$ has at least five elements, and at least one of them is therefore not reducible.  The corollary follows.
\end{proof}

\subsection{Blow ups}
We fix notation for the remainder of this subsection. Let $X$ be a del Pezzo surface of degree two and let $p$ be a point on $X$.  Denote by $b \colon X' \to X$ the blow-up of $X$ at $p$ and by $K_{X'}$ a canonical divisor on $X'$.

\begin{theorem} \label{thm:coho}
Let $n$ be a non-negative integer; the equalities 
\[
\dim |{-nK_{X'}}| = \frac{n^2+n}{2}
\quad \quad {\textrm{and}} \quad \quad 
h^1 (X', \mathcal{O}_{X'} (-nK_{X'})) = 0
\]
hold.  Moreover, the linear system $|{-nK_{X'}}|$ has a unique reduced base point if $n=1$ and is base point free otherwise.
\end{theorem}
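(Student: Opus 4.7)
The plan is to combine Riemann--Roch with cohomological vanishing to compute $h^0(-nK_{X'})$, then analyze the base locus case by case.

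For the dimension formula: since $K_{X'} = b^*K_X + E$ with $(b^*K_X)^2 = 2$, $b^*K_X \cdot E = 0$, and $E^2 = -1$, we get $K_{X'}^2 = 1$, and Riemann--Roch yields $\chi(\mathcal{O}_{X'}(-nK_{X'})) = 1 + \frac{n(n+1)}{2}$. I would then use the projection formula to translate cohomology to $X$: the standard identities $b_*\mathcal{O}_{X'}(-nE) = \mathfrak{m}_p^n$ and $R^j b_*\mathcal{O}_{X'}(-nE) = 0$ for $j > 0$, $n \geq 0$, combined with Leray, give $H^i(X', \mathcal{O}_{X'}(-nK_{X'})) = H^i(X, \mathcal{O}_X(-nK_X) \otimes \mathfrak{m}_p^n)$. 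The long exact sequence from $0 \to \mathfrak{m}_p^n \to \mathcal{O}_X \to \mathcal{O}_X/\mathfrak{m}_p^n \to 0$ tensored with $\mathcal{O}_X(-nK_X)$, together with the vanishing $H^i(X, \mathcal{O}_X(-nK_X)) = 0$ for $i > 0, n \geq 0$ (immediate from $\kappa_*\mathcal{O}_X(-nK_X) = \mathcal{O}_{\mathbb{P}^2}(n) \oplus \mathcal{O}_{\mathbb{P}^2}(n-2)$ and the cohomology of line bundles on $\mathbb{P}^2$), reduces the $h^1$- and $h^2$-vanishing claims to a single assertion: the jet-restriction map $H^0(X, -nK_X) \to \mathcal{O}_{X,p}/\mathfrak{m}_p^n$ is surjective.

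To verify this surjectivity I would split on whether $p \in R$. At $p \notin R$, the morphism $\kappa$ is \'etale at $p$, so $\mathcal{O}_{\mathbb{P}^2,\kappa(p)}/\mathfrak{m}^n \to \mathcal{O}_{X,p}/\mathfrak{m}_p^n$ is an isomorphism and the $\mathbb{P}^2$-summand $H^0(\mathbb{P}^2, \mathcal{O}(n))$ alone surjects via pullback. At $p \in R$, I would work in local coordinates $(v, w)$ on $X$ with $w^2$ a local equation of the branch curve near $\kappa(p)$: the basis $\{v^a w^b : a + b < n\}$ of $\mathcal{O}_{X,p}/\mathfrak{m}_p^n$ splits according to the parity of $b$, and a direct monomial count shows that pulled-back $\mathbb{P}^2$-polynomials supply the $b$-even monomials while the $w$-twisted summand supplies the $b$-odd monomials. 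Combined with Riemann--Roch, this gives $\dim |{-nK_{X'}}| = n(n+1)/2$ and $h^1(X', \mathcal{O}_{X'}(-nK_{X'})) = 0$.

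For the base locus, $n = 0$ is vacuous. For $n = 1$, the pencil $|{-K_{X'}}|$ has self-intersection $(-K_{X'})^2 = 1$, forcing a unique base point counted with multiplicity; a local transversality check at this point (identified as $\varphi(p)$ if $p \notin R$, or as the point of $E$ corresponding to the tangent direction of non-spine elements of the pencil $L_p \subset |{-K_X}|$ if $p \in R$) confirms reducedness. For $n \geq 2$, I would apply Reider's theorem to $L := -(n+1)K_{X'}$. The class $-K_{X'}$ is nef: on $E$ we have $-K_{X'} \cdot E = 1$, and for any other irreducible $C' = b^*C - mE$ with $m = \operatorname{mult}_p C$, intersecting $C$ with a generic member of $|{-K_X}|$ through $p$ (which has multiplicity $1$ at $p$ by genericity) gives $-K_X \cdot C \geq \operatorname{mult}_p C$, so $-K_{X'} \cdot C' = -K_X \cdot C - m \geq 0$. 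Since $L^2 = (n+1)^2 \geq 9$, Reider reduces base-point freeness of $|K_{X'}+L| = |{-nK_{X'}}|$ to excluding effective $D'$ with $(L \cdot D', (D')^2) \in \{(0,-1),(1,0)\}$; the first is ruled out because $L \cdot D' = 0$ forces $K_{X'} \cdot D'_i = 0$ for each component, making $(D'_i)^2$ even by adjunction and hence $(D')^2$ even, while the second is ruled out because $L \cdot D'$ is divisible by $n+1 \geq 3$.

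The main obstacle is the jet-restriction surjectivity at ramification points of $\kappa$: the monomial bookkeeping must be handled carefully because the local geometry is that of a ramified double cover, and the odd-$b$ monomials $v^a w^b$ are inaccessible from pulled-back $\mathbb{P}^2$-polynomials alone---they must be supplied precisely by the $w$-twisted summand of $H^0(X, -nK_X)$.
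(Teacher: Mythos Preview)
Your cohomology computation via Leray and the jet-restriction map is a legitimate alternative to the paper's inductive argument, and for the dimension and $h^1$-vanishing claims it is essentially correct in odd characteristic. The paper instead picks an integral element $D \in |{-K_{X'}}|$ (of arithmetic genus one), uses the short exact sequence
\[
0 \longrightarrow \mathcal{O}_{X'}\bigl(-(n-1)K_{X'}\bigr) \longrightarrow \mathcal{O}_{X'}\bigl(-nK_{X'}\bigr) \longrightarrow \mathcal{O}_D\bigl(-nK_{X'}\bigr) \longrightarrow 0,
\]
and inducts on $n$ using Riemann--Roch on the genus-one curve $D$. Your approach trades this induction for an explicit description of sections; the paper's approach is more self-contained and uniform across characteristics. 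One small gap in your version: the local model $w^2 = (\text{branch equation})$ at a ramification point is only valid when $\operatorname{char}(k) \ne 2$; in characteristic two the local equation is $z^2 + cz + d = 0$ with $c$ vanishing along $R$ (cf.\ the proof of Lemma~\ref{lem:Kliscio}), so your monomial bookkeeping for odd-$b$ terms needs a separate treatment there.

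The genuine problem is your appeal to Reider's theorem for base-point freeness when $n \geq 2$. Reider's theorem rests on Bogomolov's instability theorem for rank-two bundles, which is a characteristic-zero result; it is known to fail in positive characteristic without further hypotheses. Since the paper works over arbitrary fields and its principal application is to finite fields, this is not a cosmetic issue: the argument as written does not establish base-point freeness in the cases that matter. The paper's route avoids this entirely: once $h^1\bigl(-(n-1)K_{X'}\bigr) = 0$, the restriction map $H^0\bigl(X', -nK_{X'}\bigr) \to H^0\bigl(D, \mathcal{L}_n\bigr)$ is surjective, and since $\mathcal{L}_n$ has degree $n \geq 2$ on an integral curve of arithmetic genus one, $|\mathcal{L}_n|$ is base-point free (a characteristic-free fact). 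Any base point of $|{-nK_{X'}}|$ would lie on $D$ (because $nD \in |{-nK_{X'}}|$) yet also be a base point of $|\mathcal{L}_n|$, a contradiction. You already have the $h^1$-vanishing, so you could substitute exactly this argument in place of Reider and the proof would go through.
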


\begin{proof}
If $X'$ is a del Pezzo surface, the result is well-known; for instance, see~\cite{Ko}*{Corollary~III.3.2.5}.

In any case, it suffices to prove the result after an extension of the base field.  Let $L_p$ denote the linear subsystem of $|{-K_X}|$ consisting of all divisors containing $p$.  Extending the base field if necessary, we shall assume that there is an irreducible element in $|{-K_X}|$ containing $p$ and non-singular at $p$ (cf.\ Lemma~\ref{cor:irr}).

Denote by $E$ the exceptional divisor of $b$, and let $C$ be an element of $L_p$, so that the effective divisor $C' := b^{*}C-E$ is an element of $|{-K_{X'}}|$.  Using the adjunction formula, we find that the dualizing sheaf on $C'$ is trivial and hence the arithmetic genus of $C'$ equals one.  If $C$ is integral and non-singular at $p$, then the corresponding divisor $C'$ is also integral (and non-singular at $C' \cap E$).  We deduce that the linear system $|{-K_{X'}}|$ contains integral divisors and since it has dimension at least one, we deduce that it has no base components.

We proceed by induction on $n$.  The case $n=0$ is clear since $X'$ is a rational surface and hence the group ${\rm H}^1(X' , \mathcal{O}_{X'})$ vanishes by Castelnuovo's Rationality Criterion.  Suppose that $n\geq 1$ and that the result is true for smaller values of $n$.  Since the canonical divisor is not linearly equivalent to an effective divisor, an application of Serre duality shows that for every effective divisor $F$ on $X'$ the group ${\rm H}^2 \bigl( X' , \mathcal{O}_{X'} (F) \bigr)$ vanishes.  Let $D \in |{-K_{X'}}|$ be an integral element.  The long exact cohomology sequence associated to the sequence 
\begin{equation} \label{defD}
0 \longrightarrow \mathcal{O}_{X'} (-(n-1) K_{X'}) \longrightarrow \mathcal{O}_{X'} (-n K_{X'}) \longrightarrow 
\mathcal{O}_D (-n K_{X'})  \longrightarrow 0
\end{equation}
induces a short exact sequence of global sections by the inductive hypotheses and it also induces an isomorphism ${\rm H}^1 \bigl( X' , \mathcal{O}_{X'} (-nK_{X'}) \bigr) \simeq {\rm H}^1 \bigl( D , \mathcal{O}_D (-nK_{X'}) \bigr)$.  The line bundle $\mathcal{L}_n := \mathcal{O}_D (-nK_{X'})$ on $D$ has degree $n>0$ and, since $D$ is an integral curve of arithmetic genus one, we conclude that the group ${\rm H}^1 (D , \mathcal{L}_n)$ vanishes and that the dimension of ${\rm H}^0 (D , \mathcal{L}_n)$ is $n$.  In particular, the formulas for the dimensions of the various cohomology groups follow by induction.

Finally, we analyze the base points.  If the linear system associated to $\mathcal{L}_n$ has base points, then it follows that $n=1$, and the base point is a single point.  Using the exact sequence on global sections of the exact sequence of sheaves above, we conclude that the linear system $|{-nK_{X'}}|$ has no base points if $n>1$.  For the case $n=1$, there is a unique reduced base point since $|{-K_{X'}}|$ has dimension one and the equality $(-K_{X'})^2=1$ holds.
\end{proof}

\begin{theorem} \label{thm:sing}
Let $C\subset X'$ be an integral curve such that $({-K}_{X'})\cdot C = 0$. Then either $C$ is the strict transform of an exceptional curve on $X$ passing through $p$, or the point $p$ lies on the ramification divisor and $C$ is a component of the strict transform of the spine of $X$ at $p$.  In particular, every integral curve $C \subset X'$ such that $(-K_{X'}) \cdot C = 0 $ is a $(-2)$-curve.
\end{theorem}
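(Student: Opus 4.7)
My plan is to reduce to a case analysis on the multiplicity $m$ of $\overline{C} := b(C)$ at $p$, by sandwiching $\overline{C}^2$ between a Hodge index upper bound and an arithmetic genus lower bound.

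A quick computation using $K_{X'} = b^*K_X + E$ and $E^2 = -1$ gives $(-K_{X'}) \cdot E = 1$, so $C \neq E$. Thus $\overline{C}$ is an integral curve on $X$, and $C \sim b^*\overline{C} - mE$ in $\Pic(X')$. Expanding $0 = (-K_{X'}) \cdot C$ yields $(-K_X)\cdot \overline{C} = m$; ampleness of $-K_X$ forces $m \geq 1$, so in particular $p \in \overline{C}$.

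Next I apply the Hodge index theorem on $X$ to the divisor $D := 2\overline{C} + mK_X$, which satisfies $D \cdot (-K_X) = 0$. This gives $D^2 \leq 0$, which rearranges to $\overline{C}^2 \leq m^2/2$. On the other hand, a point of multiplicity $\geq m$ on an integral curve contributes at least $\binom{m}{2}$ to the arithmetic genus; combining this with adjunction $p_a(\overline{C}) = (\overline{C}^2 - m)/2 + 1$ gives $\overline{C}^2 \geq m^2 - 2$. Together these bounds force $m \leq 2$.

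If $m=1$, the upper bound gives $\overline{C}^2 \leq 0$, and integrality of $p_a(\overline{C}) = (\overline{C}^2+1)/2$ then forces $\overline{C}^2 = -1$ and $p_a(\overline{C}) = 0$, so $\overline{C}$ is an exceptional curve through $p$. If $m=2$, both inequalities are equalities, so $D$ is numerically trivial; since $\Pic(X)$ is torsion-free, $\overline{C} \sim -K_X$. The divisor $\overline{C}$ is then an integral anticanonical divisor singular at $p$, and by Lemma~\ref{lem:Kliscio} the point $p$ lies on the ramification divisor, with $\overline{C}$ the spine of $X$ at $p$. In either case $C^2 = \overline{C}^2 - m^2 = -2$ and $C \cdot K_{X'} = 0$, so adjunction on $X'$ gives $p_a(C) = 0$, proving $C$ is a $(-2)$-curve.

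The only delicate step is the lower bound $p_a(\overline{C}) \geq \binom{m}{2}$ for an integral curve with a (possibly non-ordinary) $m$-fold point at $p$; this follows from the standard inequality $\delta_p \geq \binom{m}{2}$ on the delta invariant of a curve germ of multiplicity $m$, together with $p_a(\overline{C}) = p_g(\overline{C}) + \sum_x \delta_x \geq \delta_p$. Everything else is routine intersection-theoretic bookkeeping combined with Lemma~\ref{lem:Kliscio}.
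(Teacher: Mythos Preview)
Your proof is correct and takes a genuinely different route from the paper's. The paper relies on Theorem~\ref{thm:coho} to see that $|{-K_{X'}}|$ is a pencil with integral general member and no base component; it follows that any integral curve $C$ with $(-K_{X'})\cdot C=0$ must occur as a component of a \emph{reducible} member of $|{-K_{X'}}|$, and the argument finishes by classifying such reducible members according to whether or not they contain the exceptional divisor $E$ (if they do, the image in $X$ is an element of $|{-K_X}|$ singular at $p$, hence the spine; if not, the image is reducible in $|{-K_X}|$ and Lemma~\ref{lem:comp} applies). Your approach instead pushes $C$ down to $\overline{C}\subset X$ and determines the multiplicity $m$ of $\overline{C}$ at $p$ directly, by squeezing $\overline{C}^{\,2}$ between the Hodge-index upper bound $m^2/2$ and the $\delta_p\ge\binom{m}{2}$ lower bound. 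This avoids the cohomological input of Theorem~\ref{thm:coho} entirely and is arguably more elementary; the paper's method, on the other hand, more transparently exhibits the curves with $(-K_{X'})\cdot C=0$ as components of degenerate fibres of the anticanonical pencil on $X'$, which is the picture used later (e.g.\ in Theorem~\ref{ma2}). One small remark on your write-up: in the $m=2$ case you conclude that $\overline{C}$ is the spine itself (not merely a component), which is fine---when the spine is reducible its components are exceptional curves through $p$ and are already accounted for by your $m=1$ case.
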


\begin{proof}
We work over an algebraic closure of the field of definition of $X$.  The linear system $|{-K_{X'}}|$ has irreducible general element by Lemma~\ref{cor:irr} and no base component by Theorem~\ref{thm:coho}.  Thus, if $C$ is an integral curve on $X'$ with $(-K_{X'}) \cdot C = 0$, then it follows that $C$ is an irreducible component of a reducible element of $|{-K_{X'}}|$.  Let $D$ be a reducible element of the linear system $|{-K_{X'}}|$.  If $D$ contains $E$, then $E \cdot (D-E) = E \cdot D + 1 = 2$, so that $b(D)$ is an element of $|{-K_X}|$ singular at $p$.  If $D$ does not contain $E$, then $b(D)$ is also reducible and it follows from Lemma~\ref{lem:comp} that the components of $D$ are strict transforms of exceptional curves on $X$.  It is now easy to check that the curves stated in the theorem do indeed have intersection number zero with ${-K_{X'}}$ and are $(-2)$-curves.
\end{proof}

\begin{remark}
If the point $p$ of Theorem~\ref{thm:sing} is such that $\kappa(p)$ is not contained in any bitangent line to $B$, then there is no reducible curve in $|{-K_X}|$ containing $p$ by Lemma~\ref{lem:comp}.  Moreover, there is a curve with zero intersection with $K_{X'}$ through $p$ if and only if $p$ is contained in $R$.  In this case, the curve is the spine of $X$ at $p$.  If the characteristic of $k$ is not two, then this curve is the strict transform of the tangent line to $B$ at $\kappa(p)$.

If the point $p$ of Theorem~\ref{thm:sing} is such that $\kappa(p)$ is contained in a bitangent line to $B$, then it can be contained in at most four bitangent lines by Lemma~\ref{lem:quattro}.
\end{remark}

\begin{cor}
\label{cor:correction}
Let $k$ be a field and let $X$ be a del Pezzo surface of degree two over $k$ with a rational point $p$.  The blow-up of $X$ at $p$ is a del Pezzo surface of degree one if and only if the point $p$ does not lie on any exceptional curve nor on the ramification divisor $R$. \qed
\end{cor}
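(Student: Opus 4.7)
The plan is straightforward given the previous results. A del Pezzo surface of degree one is, by definition, a smooth projective surface $Y$ with $K_Y^2 = 1$ and $-K_Y$ ample. The blow-up $X'$ is smooth because $X$ is, and the standard blow-up formula yields $(-K_{X'})^2 = (-K_X)^2 - 1 = 1$. So the corollary reduces to showing that $-K_{X'}$ is ample precisely when $p$ lies neither on an exceptional curve of $X$ nor on the ramification divisor $R$.

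First I would verify that $-K_{X'}$ is always nef. By Theorem~\ref{thm:coho} the linear system $|{-K_{X'}}|$ is non-empty and its base locus is a single reduced point, so in particular it has no base components. Hence for any integral curve $C \subset X'$ one can pick a divisor $D \in |{-K_{X'}}|$ not containing $C$, yielding $-K_{X'} \cdot C = D \cdot C \geq 0$. Combined with $(-K_{X'})^2 = 1 > 0$, the Nakai--Moishezon criterion on surfaces then reduces ampleness of $-K_{X'}$ to the condition that no integral curve $C \subset X'$ satisfies $-K_{X'} \cdot C = 0$.

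The final step is to invoke Theorem~\ref{thm:sing}, which classifies the integral curves annihilated by $-K_{X'}$ as exactly the strict transforms of exceptional curves of $X$ through $p$ together with, when $p \in R$, the components of the strict transform of the spine of $X$ at $p$. Thus if $p$ avoids both the exceptional curves and $R$, no such curve exists and $-K_{X'}$ is ample. Conversely, if $p$ lies on an exceptional curve $F$, then $\tilde F$ is such an annihilated curve; and if $p$ lies on $R$ but on no exceptional curve, then Lemma~\ref{lem:comp} forces the spine to be integral, since a reducible spine singular at $p$ would place $p$ on one of its exceptional components, so the strict transform of the spine is the desired $(-2)$-curve annihilating $-K_{X'}$.

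I do not anticipate a serious obstacle, as the substantive geometric input has already been packaged into Theorems~\ref{thm:coho} and~\ref{thm:sing}; the only point that requires a moment of care is ruling out a non-integral spine in the case $p \in R$ outside all exceptional curves, which is handled cleanly by Lemma~\ref{lem:comp}.
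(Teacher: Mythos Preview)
Your argument is correct and is exactly the approach the paper intends: the corollary carries a bare \qed\ because it is meant to follow immediately from Theorems~\ref{thm:coho} and~\ref{thm:sing} via Nakai--Moishezon, and you have simply spelled out those details (including the small observation, via Lemma~\ref{lem:comp}, that a reducible spine would force $p$ onto an exceptional curve).
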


\begin{remark}
If $X$ is a del Pezzo surface of degree $d \geq 3$, then the blow-up of $X$ at $p \in X(k)$ is a del Pezzo surface of degree $d-1$ if and only if the point $p$ does not lie on any exceptional curve.
\end{remark}

\section{Unirationality} \label{sec:uni}

In this section, we give conditions ensuring that a del Pezzo surface of degree two $X$ is unirational over a general field $k$; in the next section, we specialize to the case of finite fields.  If the surface $X$ is not minimal over $k$, then either~\cite{M}*{Theorem~29.4} or~\cite{kollar} is enough for this purpose.  On the other hand, the minimality of $X$ does not simplify our arguments and we therefore do not assume it.

Assuming the existence of a rational point $p$ on $X$, Manin constructs in~\cite{M} a rational curve on $X$ singular at $p$, provided the point $p$ is general.  Once this is done, it is easy to repeat this construction starting from each point of the rational curve and obtain a dominant rational map $\mathbb{P}^2 \to X$.  Manin assumes that the point $p$ does not lie on any exceptional curve, but for his argument to work he needs that the blow-up of $X$ at $p$ is a del Pezzo surface of degree one.  These two conditions on $p$ are not equivalent: if the blow-up of $X$ at $p$ is a del Pezzo surface of degree one, then the point $p$ does not lie on any exceptional curve, but the converse statement is not true; see Corollary~\ref{cor:correction}.

We analyze in detail Manin's argument: we prove that if $p$ is not in the union of the ramification curve of $X$ and of the points of $X$ lying on four exceptional curves, then the unirationality construction goes through (Corollary~\ref{fima}).  We begin with the following result (Theorem~\ref{ma2}) showing that the presence of a general point on the surface $X$ implies the existence of a rational curve on $X$.  We then prove that the presence of a rational curve is sufficient to prove unirationality, at least over fields of characteristic different from two (Theorem~\ref{unirat}).

\begin{theorem} \label{ma2}
Let $X$ be a del Pezzo surface of degree two defined over a field $k$ and let $p$ be a rational point on $X$.  If the point $p$ is not contained in four exceptional curves nor on the ramification divisor $R$, then there is a non-constant morphism $\mathbb{P}^1 \to X$ whose image is either an exceptional curve or contains the point $p$.
\end{theorem}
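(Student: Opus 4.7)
The plan follows Manin's classical argument, adapted to the broader hypotheses of the theorem: blow up $X$ at $p$, identify the resulting genus-one fibration, and exploit its Mordell--Weil structure to produce a rational curve.

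Let $b\colon X'\to X$ be the blow-up at $p$, with exceptional divisor $E$. Since $p\notin R$, Theorem~\ref{thm:sing} shows that the only $(-2)$-curves on $X'$ are the strict transforms of the exceptional curves of $X$ through $p$; the hypothesis bounds these by three, making $X'$ a weak del Pezzo of degree one. If any such exceptional curve is defined over $k$, the first alternative of the theorem holds; assume otherwise. By Theorem~\ref{thm:coho}, $|{-K_{X'}}|$ is a pencil with a unique reduced base point $q$. Lemma~\ref{lem:Kliscio}, using $p\notin R$, shows that every element of the pencil is smooth along $E$, forcing $q\notin E$; a direct check identifies $q$ with the Geiser conjugate $\varphi(p)\in X$. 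Blow up $q$ via $\pi\colon\tilde X\to X'$ with exceptional divisor $E_q$, obtaining a base-point-free linear system $|{-K_{\tilde X}}|$ and a $k$-rational genus-one fibration $f\colon\tilde X\to\mathbb{P}^1$, with both $E_q$ and the strict transform of $E$ as $k$-rational sections.

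Taking $E_q$ as the origin of the group law, the section $E$ corresponds to a $k$-rational point $P$ in the Mordell--Weil group of $f$. Any $k$-rational section $\sigma$ of $f$ with $\sigma\neq E$ and $\sigma\cdot E>0$ on $\tilde X$ yields, under $b\circ\pi$, a $k$-rational rational curve on $X$ through $p$: each section is isomorphic to $\mathbb{P}^1$, and the positive-intersection condition forces its image to contain $p$. Natural candidates are the multiples $[n]P$ of $P$ (with $n\neq 1$), in particular $\sigma=[-1]P$.

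The heart of the proof is to verify, for some $n\neq 1$, that $[n]P\cdot E>0$ on $\tilde X$. I would apply Shioda's height-pairing formula to the rational elliptic surface $\tilde X$ (with $\chi(\mathcal O_{\tilde X})=1$ and $E\cdot E_q=0$), expressing $[n]P\cdot E$ in terms of the height $\langle P,P\rangle$ and local contributions at reducible fibers of $f$. By Lemma~\ref{lem:comp}, the reducible fibers correspond to bitangent lines of $B$ through $\kappa(p)$, whose number is bounded by Lemma~\ref{lem:quattro}, so the local terms remain controlled and yield positive intersection for a suitable $n$. The subtle points --- the case when $P$ is torsion in the Mordell--Weil group (where one instead uses an irreducible rational component of a $k$-rational reducible fiber, which by Lemma~\ref{lem:comp} is a candidate for the exceptional-curve alternative, or else a rational curve through $p$) and characteristic-two delicacies in the group-law inversion --- are the main technical obstacles and require parallel but separate treatment.
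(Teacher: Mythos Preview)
Your plan is sound and, once executed, produces exactly the rational curve the paper finds---but by a longer road. The link you are implicitly using is that on the (weak) degree-one del Pezzo $X'$ the Bertini involution sends the class $E$ to $-2K_{X'}-E$; in your elliptic picture this is precisely the section $[-1]P$. The paper bypasses the second blow-up and all Mordell--Weil machinery: it sets $D:=-2K_{X'}-E$, uses Riemann--Roch (with $D^2=K_{X'}\cdot D=-1$ and $-K_{X'}$ nef by Theorem~\ref{thm:coho}) to get an effective divisor in $|D|$, and writes it as $D_0+D_1$ with $D_1$ the unique component of anticanonical degree one. A short argument using the Geiser involution shows $D_0$ is reduced and $D_1\neq E$; then $(D_1)^2=-1$ by a two-line intersection computation, so $D_1$ is a smooth rational curve over $k$. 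Pushed down to $X$, it either passes through $p$ or (when exactly three exceptional curves meet at $p$) is itself an exceptional curve.

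Two of your hedges are unnecessary and obscure the picture. First, $P$ is never torsion under the hypotheses: with at most three reducible fibres, each of type $I_2$ contributing $\tfrac12$, the height is $\langle P,P\rangle=2-\tfrac{m}{2}\ge\tfrac12>0$. Second, fibrewise inversion is a morphism in every characteristic, so there is no characteristic-two obstacle. If you actually carry out the height computation you defer, you recover $[-1]P\cdot E=3-m$ and the same dichotomy as above; but the direct Riemann--Roch argument on $X'$ is cleaner, uniform in the characteristic, and leaves no case to ``separate treatment.''
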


\begin{proof}
Let $b \colon X' \to X$ be the blow-up of $X$ at the point $p$ and let $E \subset X'$ denote the exceptional divisor of $b$.  Let $D$ be the divisor class $-2K_{X'}-E$ on $X'$.  We show that the linear system $|D|$ consists of a single point and that the unique effective divisor in $|D|$ contains a rational component.  We have the identities $D^2=K_{X'} \cdot D=-1$, so that the Riemann-Roch formula implies that at least one among the divisors $D$ and $K_{X'}-D$ is linearly equivalent to an effective divisor.  Since the divisor $-K_{X'}$ is nef by Theorem~\ref{thm:coho}, we deduce that $K_{X'}-D$ cannot be effective and hence $D$ is effective.  To avoid introducing more notation, we replace $D$ by an effective divisor in $|D|$.  Since the equality $-K_{X'} \cdot D = 1$ holds and $-K_{X'}$ is nef, it follows that $D=D_0+D_1$ where $D_1$ is irreducible and $-K_{X'} \cdot D_1=1$ and $-K_{X'} \cdot D_0=0$.

The divisor $D_0$ is a linear combination of the strict transforms of the exceptional curves of $X$ through the point $p$ by Theorem~\ref{thm:sing}.  Note that the exceptional curves through $p$ cannot have intersection number two with one another since otherwise the point $p$ would be contained in the ramification divisor $R$.  Thus, the strict transforms of these curves are pairwise disjoint and they form the components of $D_0$.  Hence the divisor $D_0$ is a non-negative combination of strict transforms of exceptional curves containing $p$; the divisor $D_0$ could equal zero, if there are no exceptional curves through $p$.  By hypothesis and Lemma~\ref{lem:quattro}, there are at most three such exceptional curves.  We may therefore write $D_0 = n_1 E_1 + n_2 E_2 + n_3 E_3$, where $E_1,E_2,E_3$ are strict transforms of exceptional curves on $X$ such that for $i \in \{1,2,3\}$ if $n_i$ is non-zero, then $E_i$ contains $p$.  We want to show that the divisor $D_0$ is reduced, or equivalently that $n_1,n_2,n_3$ are at most one.

Assume to the contrary that $D_0$ is not reduced; permuting if necessary the indices, we reduce to the case in which $n_1 \geq 2$.  From the identity $D = - 2 b^*(K_X) - 3E = D_1 + n_1E_1+n_2E_2+n_3E_3$, we deduce the equality 
\begin{equation} \label{impo}
2(- b^*(K_X) - E_1) = 3E + D_1 + (n_1-2)E_1+n_2E_2+n_3E_3
\end{equation}
of effective divisors on $X'$.  The divisor $- b^*(K_X) - E_1$ is the inverse image of the exceptional curve $\overline{E_1}$ on $X$ such that $b(E_1) + \overline{E_1} = -K_X$, so that $\overline{E_1}$ is the result of applying the Geiser involution to the exceptional curve corresponding to $E_1$.  The curve $\overline{E_1}$ does not contain the point $p$, since otherwise the point $p$ would be contained in the ramification divisor $R$.  It follows that the linear systems $|\overline{E_1}| = |{- b^*(K_X) - E_1}|$ and $|2\overline{E_1}| = |2(- b^*(K_X) - E_1)|$ contain a unique curve, and neither of these curves contains the curve $E$ as a component.  We conclude that the identity~\eqref{impo} is impossible and that $D_0$ is reduced.

As a consequence of what we just argued, we prove that the divisor $D_1$ cannot equal $E$.  Assume for a contradiction that the identity $-2K_{X'}-E=D_0+E$ holds; pushing this identity forward to $X$ we find $-2K_X = b_*(D_0)$ and $b_*(D_0)$ is a sum of at most three exceptional curves.  In particular, $b_*(D_0)$ has anticanonical degree at most three, whereas $-2K_X$ has anticanonical degree four.

To conclude the proof, it suffices to show that $D_1$ is a smooth rational curve.  By the adjunction formula and the identity $-K_{X'} \cdot D_1 = 1$, it suffices to show that $(D_1)^2=-1$ and this follows from the identity $(D_1)^2= (D-D_0)^2 = -1-2(n_1^2+n_2^2+n_3^2) - 2 (n_1+n_2+n_3) = -1$.
\end{proof}

\begin{theorem} \label{unirat}
Let $k$ be a field and $X$ a del Pezzo surface of degree two over $k$.  Suppose that $\rho \colon \mathbb{P}^1 \to X$ is a non-constant morphism; if the characteristic of the field $k$ is two and the image of $\rho$ is contained in the ramification divisor, then assume also that the field $k$ is perfect.  Then the surface $X$ is unirational.
\end{theorem}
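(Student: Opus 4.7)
The plan is to apply Theorem~\ref{ma2} at the generic point of the image of $\rho$, thereby constructing a family of rational curves that sweep out $X$. Let $C \subset X$ denote the (reduced) image of $\rho$, with function field $K := k(C)$. Since $\mathbb{P}^1$ surjects onto $C$, the curve $C$ is geometrically rational and has a $k$-rational point (any image of a $k$-point of $\mathbb{P}^1$), hence is $k$-birational to $\mathbb{P}^1$; in particular $K$ is purely transcendental over $k$. The generic point of $C$ provides a $K$-rational point $p_K \in X_K$.

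Suppose first that $C \not\subset R$, so that $p_K \notin R_K$. The generalized Eckardt points of $X_{\overline{K}}$ form the finite set obtained by base change from $X_{\overline{k}}$, and as $p_K$ is the generic point of a curve it coincides with none of them; hence $p_K$ satisfies the hypotheses of Theorem~\ref{ma2} over $K$. This produces a non-constant morphism $\sigma \colon \mathbb{P}^1_K \to X_K$ whose image is either an exceptional curve of $X_K$ or contains $p_K$. The surjection $\Gal(\overline{K}/K) \twoheadrightarrow \Gal(\overline{k}/k)$ shows that any $K$-rational exceptional curve of $X_K$ is already defined over $k$: if this alternative occurs, we contract the curve over $k$ to obtain a smooth del~Pezzo surface of degree three over $k$ with a rational point, which is unirational over $k$ by the classical theorems of Segre, Manin, and Koll\'ar, whence $X$ is unirational. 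Otherwise the image of $\sigma$ contains $p_K$, and spreading out yields a rational map $f \colon C \times \mathbb{P}^1 \dashrightarrow X$ over $k$. Since $C \times \mathbb{P}^1$ is $k$-rational, it suffices to show that $f$ is dominant: by the proof of Theorem~\ref{ma2} the image of $\sigma$ is identified with the unique member of $|{-2K_X}|$ having a point of multiplicity at least three at $p_K$, and since any fixed curve has only finitely many points of multiplicity $\geq 3$, this image must vary with $p_K$, yielding dominance.

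The remaining case $C \subset R$ cannot occur when the characteristic is different from two, because there $R$ is smooth of genus three (isomorphic to the branch quartic) and admits no non-constant morphism from $\mathbb{P}^1$. So assume the characteristic is two and, by hypothesis, $k$ is perfect. Here Theorem~\ref{ma2} is replaced by the spine construction of Lemma~\ref{lem:Kliscio}: each $c \in R$ yields a unique spine $S_c \in |{-K_X}|$ singular at $c$, which is rational, being an arithmetic-genus-one curve with a singularity (or reducible with rational components through $c$). The family $\{S_c\}_{c \in C}$ assembles into a rational map $C \times \mathbb{P}^1 \dashrightarrow X$ over $k$, dominant because the singular point moves with $c$. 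Perfectness of $k$ enters when descending the parametrization of the spines to $k$ and accommodating the possibly non-reduced structure of $R$ in characteristic two.

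The principal obstacle is the dominance verification in the second subcase of $C \not\subset R$: one must extract from the proof of Theorem~\ref{ma2} the multiplicity-$\geq 3$ feature and argue that it forces the constructed curve to move genuinely with its base point, ruling out collapse onto a single curve. A secondary obstacle is the characteristic-two case $C \subset R$, where the non-standard scheme structure of the ramification divisor necessitates replacing Theorem~\ref{ma2} by the spine construction and a careful deployment of the perfectness hypothesis.
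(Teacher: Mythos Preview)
Your argument for $C \not\subset R$ is correct and matches the paper's approach: apply Theorem~\ref{ma2} at the generic point of $C$ and spread out. Your dominance argument via the multiplicity-$3$ condition is a valid way to unpack the paper's terse ``and conclude,'' which the paper does not spell out. The handling of the exceptional-curve alternative by contraction to a cubic surface is also fine, though in fact unnecessary once you note that if $C$ is not itself an exceptional curve then its generic point lies on no exceptional curve of $X_K$ at all, so that branch of Theorem~\ref{ma2} never fires.

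The characteristic-two case $C \subset R$, however, has a genuine gap. You assert that the family of spines assembles into a rational map $C \times \mathbb{P}^1 \dashrightarrow X$, but this presupposes that the generic spine $S_\eta$---or rather its normalization---is isomorphic to $\mathbb{P}^1$ over $k(C)$. All you have established is that $S_\eta$ is \emph{geometrically} rational: it is an integral curve of arithmetic genus one with a singularity at $\eta$. The singular point $\eta$ is rational over $k(C)$, but being singular it need not lift to a rational point on the normalization; the branches over a node could be Galois-conjugate, leaving the normalized spine a pointless conic. This is exactly the difficulty the paper's proof addresses. When the reduced branch conic has degree two, the spine meets $R$ in a second point distinct from $\eta$, and that point (after a purely inseparable extension) furnishes the needed section. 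When the branch locus is a double line, an explicit local calculation shows the singularity of the spine is \emph{unibranch}, so the fibre of the normalization over $\eta$ is a single purely inseparable point. In either case one then composes $\rho$ with a suitable power of Frobenius to descend the section to $k(C)$; this is where perfectness of $k$ is genuinely used. Your proposal gestures at perfectness but does not carry out either of these steps, and without them the passage from ``spine is geometrically rational'' to ``$C \times \mathbb{P}^1 \dashrightarrow X$'' is unjustified.
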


\begin{proof}
The generic point of the image of the morphism $\rho$ is not contained in four exceptional curves, since the morphism $\rho$ is non-constant; if the image of $\rho$ is not contained in the ramification curve $R$, then we can apply Theorem~\ref{ma2} to the generic point of the image of $\rho$ and conclude.  If the characteristic of the field $k$ is different from two, the divisor $R$ is a smooth curve of genus three, and hence the image of $\rho$ cannot be contained in $R$.  Thus, we reduce to the case where the characteristic of the field $k$ is two and the image of $\rho$ is contained in the ramification divisor $R$; by assumption, the field $k$ is perfect.

First, we give an outline of the argument.  Let $S$ be the spine of $X$ at the generic point $\eta$ of the image of $\mathbb{P}^1$: the curve $S$ over $\eta$ is a geometrically integral curve of arithmetic genus zero.  The normalization $C$ of $S$ is a smooth rational curve defined over a purely inseparable extension of $\eta$ (possibly $\eta$ itself).  Composing, if necessary, the morphism $\rho$ with the Frobenius morphism, we reduce to the case in which the curve $C$ is defined over~$\eta$.  Thus, to conclude it suffices to show that $C$ has a rational point, or equivalently that there is a section to the morphism $C \to \eta$, since $C$ determines a one-parameter family of smooth rational curves over $\mathbb{P}^1$ covering the generic point of $X$.  If the degree of the geometric reduced branch curve is equal to $2$, the spine $S$ meets the ramification divisor at two distinct points: one determines the singular point of the spine, the other determines a purely inseparable section to the morphism $C \to \eta$.  Otherwise, the geometric branch curve is a double line, the normalization $C \to S$ is a unibranch morphism, and the reduced subscheme over the singular point of $S$ is a purely inseparable section of $C \to \eta$.  In both cases, we conclude by further composing $\rho$ with the Frobenius morphism, if necessary.

We now execute the above strategy in the case where the branch curve is a double line; this case presents all the technical difficulties that arise.  We work over the algebraic closure of the generic point of the image of the morphism $\rho$, since all we need to check is that the morphism $C \to S$ is geometrically unibranch.  Let $\mathbb{P}(1,1,1,2)$ be the weighted projective space with coordinates $x,y,z$ of weight one and $w$ of weight two.  After a change of coordinates, an equation for $X$ is 
\[
X \colon \quad w^2 + x^2 w + g(x,y,z) = 0
\]
where $g$ is a form of degree four, the ramification curve is the curve with equation $x=0$, and the geometric generic point $\eta$ of the image of the morphism $\rho$ is the point $p = [0,0,1,0]$.  The spine $S$ of $X$ at the point $p$ is neither contained in the ramification divisor, nor is it a union of exceptional curves.  In particular, an equation for the line $\kappa(S)$ is $y = \alpha x$, for some $\alpha$, and an equation of the spine $S$ in $\mathbb{P}(1,1,2)$ is 
\begin{equation} \label{stl}
S \colon \quad w^2 + x^2 w + g(x, \alpha x ,z) = 0.
\end{equation}
Dehomogenize with respect to $z$ and observe that the homogeneous component of degree two of equation~\eqref{stl} is a square.  Moreover, the curve defined by~\eqref{stl} is unibranch if and only if the spine $S$ is not a union of exceptional curves, and we conclude that this curve is indeed unibranch, as required.
\end{proof}

\begin{cor} \label{fima}
Let $X$ be a del Pezzo surface of degree two defined over a field $k$ and let $p$ be a rational point on $X$.  If the point $p$ is not contained in four exceptional curves nor on the ramification divisor $R$, then the surface $X$ is unirational.
\end{cor}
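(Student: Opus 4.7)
The plan is to combine Theorem~\ref{ma2} with Theorem~\ref{unirat}. The hypotheses of Theorem~\ref{ma2} hold for $p$, so we obtain a non-constant morphism $\rho\colon\mathbb{P}^1\to X$ whose image is either an exceptional curve or a curve containing $p$. We then feed $\rho$ into Theorem~\ref{unirat}; the conclusion is unirationality, provided we handle the single remaining technicality: in characteristic two, Theorem~\ref{unirat} requires $k$ to be perfect exactly when the image of $\rho$ lies in the ramification divisor $R$.

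I would eliminate this obstruction by showing that in either alternative produced by Theorem~\ref{ma2}, the image of $\rho$ cannot lie in $R$. If the image contains $p$, then since $p\notin R$ by hypothesis, the image meets the complement of $R$ and so is not contained in $R$. If the image is an exceptional curve $C$, then $\kappa(C)$ is a bitangent line, so by Lemma~\ref{lem:comp} the preimage $\kappa^{-1}(\kappa(C))$ is a reducible element of $|{-K_X}|$ of the form $C\cup C'$ with $C\ne C'$, and the Geiser involution $\varphi$ swaps $C$ and $C'$. Since $\varphi$ fixes $R$ pointwise, by construction of the double cover (in any characteristic), the inclusion $C\subset R$ would force $\varphi(C)=C$, contradicting $\varphi(C)=C'\ne C$.

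With the image of $\rho$ confirmed to avoid $R$, Theorem~\ref{unirat} applies unconditionally and yields the unirationality of $X$. The only nontrivial ingredient in the argument is the Geiser-involution observation above; everything else is a direct appeal to the previous two theorems. I expect no essential obstacle beyond carefully identifying that the perfectness hypothesis in Theorem~\ref{unirat} is never actually triggered here.
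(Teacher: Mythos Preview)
Your approach is exactly the paper's: combine Theorem~\ref{ma2} with Theorem~\ref{unirat}. The paper's proof is the single sentence ``This follows from Theorems~\ref{ma2} and~\ref{unirat},'' leaving implicit the check that the perfectness hypothesis of Theorem~\ref{unirat} is never triggered; you make this explicit, and your Geiser-involution argument that an exceptional curve cannot lie in $R$ (since $\varphi^*[C]=[-K_X-C]=[C']\neq[C]$, while $\varphi$ fixes $R$ pointwise) is correct and a nice way to close that gap.
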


\begin{proof}
This follows from Theorems~\ref{ma2} and~\ref{unirat}.
\end{proof}

We conclude this section with two results that imply unirationality of certain del Pezzo surfaces of degree two by applying Corollary~\ref{fima} or Theorem~\ref{unirat}.  These results are will play a key role in the proof of Theorem~\ref{thm:main}.

\begin{lemma}[Eight points Lemma] \label{otto}
Let $X$ be a del Pezzo surface of degree two over a field $k$ and suppose that $X(k)$ contains 8 points $p_1,\ldots,p_8$ with the property that $\kappa(p_1),\ldots,\kappa(p_8)$ are distinct points not lying on the branch curve.  Then one of the points $p_1,\ldots,p_8$ is either contained in an exceptional curve defined over $k$, or it is not a generalized Eckardt point.  In particular, the surface $X$ is unirational.
\end{lemma}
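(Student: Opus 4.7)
The plan is a proof by contradiction. Suppose every $p_i$ is a generalized Eckardt point and no $p_i$ lies on an exceptional curve defined over $k$. I will derive a contradiction by exhibiting a bitangent line defined over $k$ whose two preimage components under $\kappa$ are also each defined over $k$, forcing some $p_i$ to lie on an exceptional curve defined over $k$.

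Since $\kappa(p_i) \notin B$, each $p_i$ lies outside the ramification divisor $R$. Two Geiser-paired exceptional curves $C,C'$ with $C + C' \sim -K_X$ satisfy $C \cdot C' = 2$ by Lemma~\ref{lem:comp}, and their two intersection points lie on $R$; hence no point outside $R$ can belong to both. Consequently the four exceptional curves through each $p_i$ come from four distinct Geiser pairs, so each $\kappa(p_i)$ lies on exactly four of the $28$ bitangent lines to $B$.

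A double count of incidences between the eight distinct points $\kappa(p_1),\ldots,\kappa(p_8)$ and the $28$ bitangent lines produces $8 \cdot 4 = 32 > 28$ incidences, so by pigeonhole some bitangent line $\ell$ passes through at least two of the $\kappa(p_i)$. Since these are distinct $k$-rational points of $\mathbb{P}^2$, the line $\ell$ is defined over $k$. Write $\kappa^{-1}(\ell) = C_1 \cup C_2$ (Lemma~\ref{lem:comp}); the set $\{C_1,C_2\}$ is stable under $\Gal(\overline{k}/k)$. If some $\sigma$ swaps $C_1$ and $C_2$, then any $k$-rational point of $C_1 \cup C_2$ must be $\sigma$-fixed and hence lie in $C_1 \cap C_2 \subset R$; but each $p_i$ with $\kappa(p_i) \in \ell$ lies on $\kappa^{-1}(\ell) = C_1 \cup C_2$ and satisfies $p_i \notin R$, a contradiction. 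Hence both $C_1$ and $C_2$ are defined over $k$, and any such $p_i$ lies on an exceptional curve defined over $k$, contradicting the original assumption.

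The unirationality conclusion is then immediate. Some $p_i$ either lies on an exceptional curve $E$ defined over $k$, or is not a generalized Eckardt point. In the first case $E$ is geometrically a smooth rational curve defined over $k$ containing the $k$-point $p_i$, so $E \cong \mathbb{P}^1_k$ yields a non-constant morphism $\mathbb{P}^1 \to X$ whose image is not contained in $R$ (since $E \cdot R = E \cdot (-2K_X) = 2$), and Theorem~\ref{unirat} applies. In the second case $p_i \notin R$ because $\kappa(p_i) \notin B$, and $p_i$ lies on at most three exceptional curves, so Corollary~\ref{fima} applies. The most delicate step of the plan is the Galois case analysis ruling out the swapped configuration of $\{C_1,C_2\}$; everything else reduces to a pigeonhole count on bitangent incidences.
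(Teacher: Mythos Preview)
Your proof is correct and follows essentially the same approach as the paper: a pigeonhole count of the $8\cdot 4=32$ incidences between the points $\kappa(p_i)$ and the $28$ bitangent lines forces some bitangent $\ell$ to contain two of the $\kappa(p_i)$, hence to be defined over $k$; then the $k$-rational point $p_i\notin R$ lying over $\ell$ pins down a Galois-invariant component of $\kappa^{-1}(\ell)$. The paper organizes this as a reduction (first dispose of the case where some bitangent through a $\kappa(p_i)$ is $k$-rational, then derive the pigeonhole contradiction), whereas you fold everything into a single contradiction, and you spell out explicitly why the four exceptional curves through a generalized Eckardt point $p_i\notin R$ lie over four distinct bitangent lines and why $C_1\cap C_2\subset R$; these are cosmetic differences only.
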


\begin{proof}
If there is a point $p \in \{p_1,\ldots,p_8\}$ and a bitangent line $\ell$ in $\mathbb{P}^2$ defined over $k$ and containing the point $\kappa(p)$, then the irreducible component of $\kappa^{-1}(\ell)$ containing $p$ is an exceptional curve defined over $k$, as required.  Hence, we reduce to the case in which no bitangent line through $\kappa(p_1),\ldots,\kappa(p_8)$ is defined over the ground field.  It follows that no bitangent line contains more than one of the points $\kappa(p_1),\ldots,\kappa(p_8)$, since otherwise it would contain two rational points, and it would be defined over $k$.

We proceed by contradiction, and assume that the points $p_1,\ldots,p_8$ are all generalized Eckardt points and that there is no bitangent line defined over $k$ and containing one of the points $p_1,\ldots,p_8$.  By the above argument we deduce that for each point $p \in \{p_1,\ldots,p_8\}$ the four bitangent lines containing $\kappa(p)$ do not meet any of the points in $\{\kappa(q) : q \in \{p_1,\ldots,p_8\}, q \neq p \}$, and thus that there must be at least $4 \cdot 8 = 32$ bitangent lines.  This contradiction concludes the argument.
\end{proof}

\begin{lemma} \label{altra}
Let $a_1,\ldots,a_6 \in k$ be such that the equation 
\begin{equation} \label{e2inv}
w^2 = a_1^2 x^4 + a_2^2 y^4 + a_3^2 z^4 + (a_4 x^2 y^2 + a_5 x^2 z^2 + a_6 y^2 z^2)
\end{equation}
defines a del Pezzo surface $X$ of degree two in $\bP(1,1,1,2)$ and let $C$ be the smooth plane conic with equation $C \colon (a_4 - 2a_1 a_2) z^2 + (a_5 - 2 a_1 a_3) y^2 + (a_6 - 2 a_2 a_3) x^2 = 0$.  There is a morphism $f \colon C \to X$ that is birational to its image.  In particular, if the conic $C$ is isomorphic to $\mathbb{P}^1$ over $k$, then the surface $X$ is unirational.
\end{lemma}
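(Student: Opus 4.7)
The starting idea is to look for a rational curve on $X$ along which the coordinate $w$ admits the simple expression $w = a_1 x^2 + a_2 y^2 + a_3 z^2$. Squaring this ansatz and comparing with the equation of $X$ shows that such a locus projects onto the plane quartic
\[
(a_4 - 2a_1 a_2) x^2 y^2 + (a_5 - 2a_1 a_3) x^2 z^2 + (a_6 - 2 a_2 a_3) y^2 z^2 = 0,
\]
and this quartic is exactly the transform of the conic $C$ under the standard Cremona transformation $\sigma \colon \bP^2 \dasharrow \bP^2$, $[s:t:u] \mapsto [tu:su:st]$. This is the key observation and it dictates the recipe for $f$.

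Accordingly, the plan is to define the rational map $f \colon \bP^2 \dasharrow \bP(1,1,1,2)$ by
\[
f[s:t:u] = [\, tu \,:\, su \,:\, st \,:\, a_1 t^2 u^2 + a_2 s^2 u^2 + a_3 s^2 t^2 \,],
\]
which is well-defined in weighted projective space, since under $[s:t:u]\sim[\mu s:\mu t:\mu u]$ the first three coordinates scale by $\mu^2$ and the last by $\mu^4$, consistent with the weights $(1,1,1,2)$. A direct expansion then shows that the pullback under $f$ of the defining polynomial $w^2 - a_1^2 x^4 - a_2^2 y^4 - a_3^2 z^4 - a_4 x^2 y^2 - a_5 x^2 z^2 - a_6 y^2 z^2$ of $X$ equals $-s^2 t^2 u^2$ times the equation of $C$. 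Hence $f$ restricts to a rational map $C \dasharrow X$, which extends uniquely to a morphism $f|_C \colon C \to X$ because $C$ is a smooth projective curve.

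For birationality onto its image, I would observe that $\kappa \circ f$ equals $\sigma$, a birational involution of $\bP^2$; since the smooth conic $C$ is not among the three coordinate lines contracted by $\sigma$, the restriction $\sigma|_C$ is birational onto its image, and hence so is $f|_C$. When $C \cong \bP^1_k$, this yields a non-constant morphism $\bP^1_k \to X$, and Theorem~\ref{unirat} then gives that $X$ is unirational.

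The one technical subtlety lies in applying Theorem~\ref{unirat} in characteristic two, where one must either verify directly from the explicit formula for $w$ that the image of $f|_C$ is not contained in the ramification divisor of $\kappa$, or appeal to the perfectness of $k$; the latter is automatic in the intended application of the lemma to del Pezzo surfaces over finite fields.
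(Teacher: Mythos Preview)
Your proposal is correct and follows essentially the same route as the paper: both arguments use the ansatz $w = a_1 x^2 + a_2 y^2 + a_3 z^2$ to cut out the nodal plane quartic $(a_4 - 2a_1 a_2) x^2 y^2 + (a_5 - 2a_1 a_3) x^2 z^2 + (a_6 - 2 a_2 a_3) y^2 z^2 = 0$ on $X$, and then identify this quartic with the conic $C$ via the standard Cremona transformation. Your packaging of this as a single explicit map $f\colon \bP^2 \dasharrow \bP(1,1,1,2)$ is a bit more direct, and your justification of birationality via $\kappa\circ f = \sigma$ is a detail the paper leaves implicit; conversely, the paper spells out why the three coefficients of $C$ are nonzero (from smoothness of $X$), which in particular confirms that $C$ avoids the base points of $\sigma$. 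Your characteristic-two caveat is harmless but in fact vacuous, since a diagonal conic in characteristic two is never smooth.
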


\begin{proof}
Intersecting the surface $X$ and the surface with equation $w = a_1 x^2 + a_2 y^2 + a_3 z^2$ and eliminating the variable $w$ we obtain the plane quartic $Q$ with equation 
\[
Q \colon (a_4 - 2a_1 a_2) x^2y^2 + (a_5 - 2 a_1 a_3) x^2z^2 + (a_6 - 2 a_2 a_3) y^2z^2 = 0 .
\]
Observe that the coefficient $(a_4 - 2a_1 a_2)$ of the monomial $x^2y^2$ in the equation of $Q$ is nonzero, since otherwise the points $[a,b,0]$ satisfying $a_1a^2+a_2b^2=0$ are singular for the surface $X$; a similar argument shows that the coefficients of $x^2z^2,y^2z^2$ in the equation of $Q$ are also nonzero.  The quartic $Q$ has simple nodes at the three coordinate points, and is non-singular otherwise.  Applying the Cremona transformation $[x,y,z] \mapsto [\frac{1}{x} , \frac{1}{y} , \frac{1}{z}]$, changes the quartic $Q$ into the conic $C$ with equation $(a_4 - 2a_1 a_2) z^2 + (a_5 - 2 a_1 a_3) y^2 + (a_6 - 2 a_2 a_3) x^2 = 0$, as required.  The final statement follows from Theorem~\ref{unirat}.
\end{proof}

\begin{remark}
Changing the sign of any of the three elements $a_1,a_2,a_3 \in k$ determines four conics with a non-constant morphism to $X$: if any of these conics has a point, then the surface $X$ is unirational.

Over an algebraically closed field $k$ of characteristic different from two, a del Pezzo surface of degree two has an equation as in~\eqref{e2inv} if and only if its ramification curve admits two distinct commuting involutions.  Moreover, the points $[1,0,0,a_1]$, $[0,1,0,a_2]$, and $[0,0,1,a_3]$ are all generalized Eckardt points and the image of the conic $C$ in the surface $X$ has nodes at these points.
\end{remark}

\section{Unirationality over finite fields} \label{sec:unifin}

Let $\bF$ be a finite field with $q$ elements with an algebraic closure $\Fbar$, and let $X$ be a smooth projective variety over $\bF$; choose a prime $\ell$ different from the characteristic of the field $\mathbb{F}$. Let $\Fr\colon \Xbar \to \Xbar$ be the geometric Frobenius morphism and let $\Fr^*\colon {\rm H}_\et^\bullet\left(\Xbar,\bQ_\ell\right) \to {\rm H}_\et^\bullet\left(\Xbar,\bQ_\ell\right)$ be induced map on $\ell$-adic cohomology. Recall that the Lefschetz trace formula is the identity
\begin{equation} \label{trfr}
\#X(\bF) = \sum_i (-1)^i \Tr\left(\Fr^*\colon {\rm H}_\et^i\left(\Xbar,\bQ_\ell\right) \to {\rm H}_\et^i\left(\Xbar,\bQ_\ell\right) \right).
\end{equation}
By the Weil Conjectures, for every integer $i$, the linear map $\Fr^*\big|_{{\rm H}_\et^i\left(\Xbar,\bQ_\ell\right)}$ has integral characteristic polynomial and its eigenvalues have absolute value $q^{i/2}$.

We specialize now to the case in which $X$ is a del Pezzo surface of degree $2$.  The $\ell$-adic cohomology group ${\rm H}_\et^2\left(\Xbar,\bQ_\ell\right)$ is isomorphic to $\Pic(\Xbar) \otimes \bQ_\ell$ as a Galois module, and in particular it has rank $7$.  The group of lattice automorphisms of $\Pic(\Xbar)$ fixing the canonical class is $W(E_7)$, the Weyl group of $E_7$.  Since the action of the Frobenius endomorphism $\Fr^*$ preserves the intersection pairing on $\Pic(\Xbar)$ and fixes the canonical divisor class, it follows that the Frobenius endomorphism $\Fr^*$ acts on ${\rm H}_\et^2\left(\Xbar,\bQ_\ell\right)$ via an element of $W(E_7)$.  As a $W(E_7)$-module, the vector space $\Pic(\Xbar) \otimes_\bZ \bC$ is isomorphic to the direct sum of the trivial representation and the natural representation of $W(E_7)$ on the Cartan subalgebra of the Lie algebra $E_7$; we denote this representation by $\rho \colon W(E_7) \to \GL( \Pic(\Xbar) \otimes_\bZ \bC)$.

Since the cohomology groups ${\rm H}_\et^0\left(\Xbar,\bQ_\ell\right)$ and ${\rm H}_\et^4\left(\Xbar,\bQ_\ell\right)$ have dimension one, we deduce that the trace of the Frobenius endomorphism on these two groups gives a contribution of $q^2+1$ to the Lefschetz trace formula~\eqref{trfr}.  Moreover, since the cohomology group ${\rm H}_\et^2\left(\Xbar,\bQ_\ell\right)$ has dimension $7$ and contains a one-dimensional subspace with trivial action of the Frobenius endomorphism, we obtain that the contribution of this group to the Lefschetz trace formula is at least $-6q$.

\begin{lemma} \label{traces}
Let $\rho \colon W(E_7) \to \GL( \Pic(\Xbar) \otimes_\bZ \bC)$ be the natural representation of the Weyl group of $E_7$.  We have an equality of sets
\[
\{ \Tr (\rho(g)) \mid g \in W(E_7) \} = \{-6,-4,-3,-2,-1,0,1,2,3,4,5,6,8\};
\]
in particular, $-5$ does not occur as a trace in this representation.  We also have the equality
\[
\{ \Tr (\rho(g)) \mid g \in W(E_7) {\textup{ and }} ord(g) \equiv 0 \pmod{4} \} = \{-2,-1 ,0,1,2, 3,4\};
\]
in particular, the trace of any element of order divisible by four is at least $-2$.
\end{lemma}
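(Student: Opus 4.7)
The plan is to reduce both assertions to a finite direct check on the character table of $W(E_7)$. Since $\Pic(\Xbar) \otimes_{\bZ} \bC$ decomposes as the sum of the trivial representation and the $7$-dimensional reflection representation of $W(E_7)$, one has $\Tr(\rho(g)) = 1 + \chi(g)$ for every $g \in W(E_7)$, where $\chi$ is the reflection character. Thus the two statements translate into statements about the values of $\chi$ on the $60$ conjugacy classes of $W(E_7)$: namely that the set of values of $1+\chi$ is the asserted $13$-element set, and that on classes of order divisible by $4$ the values of $1+\chi$ are bounded below by $-2$.

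I would organize the verification using Carter's classification of conjugacy classes in Weyl groups via admissible diagrams. In this framework the characteristic polynomial of any $g \in W(E_7)$ on the reflection representation factors as a product of cyclotomic polynomials determined by the admissible diagram of $g$; from this factorization one reads off both $\chi(g)$ (as the sum of the roots) and $\mathrm{ord}(g)$ (as the lcm of the orders of the cyclotomic factors). Enumerating the $60$ classes and tabulating the pair $\bigl(\chi(g),\mathrm{ord}(g)\bigr)$ for each reduces the lemma to direct inspection. In practice this is cleanest to carry out in a computer algebra system such as GAP or Magma, which ship with the character table of $W(E_7)$ and the power maps required to extract element orders.

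The only real obstacle is bookkeeping. With $60$ classes one has to verify not merely that the listed values occur but also exclude every other possibility; in particular one must rule out $\chi(g) = -6$ entirely (so that $-5$ is not attained by $\rho$), and also rule out all values of $\chi(g)$ outside $\{-3,-2,-1,0,1,2,3\}$ on elements of order divisible by $4$. Each exclusion is a straightforward but unavoidable finite check against the tabulated data, and would be the part of the argument most prone to oversight if attempted by hand rather than by machine.
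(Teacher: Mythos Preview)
Your proposal is correct and takes essentially the same approach as the paper, which simply writes ``See tables in the Atlas~\cite{atlas}.'' Your decomposition $\Tr(\rho(g)) = 1 + \chi(g)$ is exactly the observation the paper makes in the paragraph preceding the lemma, and your suggested verification via Carter's tables or a computer algebra system is just a more explicit version of the paper's appeal to published character tables.
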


\begin{proof}
See tables in the Atlas~\cite{atlas}.
\end{proof}

To prove the surface $X$ is unirational, we need to estimate the number of points on the ramification curve $R$, or equivalently on the branch curve $B$.  If the characteristic of $\mathbb{F}$ is two, then this curve is a plane conic, and therefore it has at most $2q+1$ points.  Otherwise, the curve $B$ is a smooth plane quartic and the following result of St\"ohr and Voloch applies.

\begin{lemma}[St\"ohr-Voloch~\cite{SV}] \label{stvo}
Let $C$ be a smooth plane quartic over a finite field $\mathbb{F}$ with $q$ elements of characteristic different from two.  Then the number of points of $C$ over $\mathbb{F}$ is at most $2q+6$, unless the field $\mathbb{F}$ has $9$ elements and the curve $C$ is isomorphic to the Fermat quartic with equation $x^4+y^4+z^4=0$.
\end{lemma}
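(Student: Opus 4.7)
The plan is to apply the St\"ohr--Voloch inequality to the degree-four morphism $C \hookrightarrow \bP^2$, regarded as the complete linear system of lines on $C$, which is a $g^2_4$.  For a non-singular plane curve of degree $d$ and genus $g$ over $\bF_q$ with classical generic order sequence $(0,1,2)$, St\"ohr--Voloch attach Frobenius orders $(0,\nu_1)$ with $\nu_1\in\{1,2\}$ and prove
\[
\#C(\bF_q) \leq \frac{\nu_1(2g-2)+(q+2)d}{2}.
\]

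Specializing to $d=4$ and $g=3$, this bound becomes $2q+6$ in the case $\nu_1=1$ (the \emph{Frobenius-classical} case), which is exactly the desired inequality.  All the geometric content of the lemma therefore reduces to ruling out the \emph{Frobenius non-classical} case $\nu_1=2$.  Geometrically, $\nu_1=2$ means that for a generic $\overline{\bF}_q$-point $P$ of $C$ the tangent line to $C$ at $P$ passes through the Frobenius image $\Phi(P)$.  I would invoke the Hefez--Voloch classification of Frobenius non-classical smooth plane curves, which implies that for degree $d=4$ in characteristic different from two the only such curve is $\bF_q$-projectively equivalent to the Hermitian curve $x^{4}+y^{4}+z^{4}=0$ over $\bF_9$, exactly the excluded case in the statement.

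A subtlety to address is characteristic three, where the classical-orders hypothesis can itself fail: the Hessian of $x^4+y^4+z^4$ vanishes identically in characteristic $3$, and in general the generic order sequence of a smooth quartic in characteristic three may equal $(0,1,3)$ rather than $(0,1,2)$.  I would handle this by applying the variant of the St\"ohr--Voloch inequality that allows non-classical order sequences, with $\epsilon_2=3$: when the Frobenius order satisfies $\nu_1=1$ the bound $\#C(\bF_q)\leq 2q+6$ still goes through unchanged, and when $\nu_1>1$ one again falls inside the Hefez--Voloch exceptional locus and concludes that $C$ is the Fermat quartic over $\bF_9$.

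The principal obstacle is the Hefez--Voloch classification step.  To avoid invoking it as a black box one would study the identical vanishing on $C$ of a Wronskian-type determinant built from a defining equation $F$ of $C$ and its $q$-th Frobenius, and deduce from this vanishing that $F$ is Hermitian up to a linear change of coordinates.  This requires a careful analysis of the osculating flag of $C$ in positive characteristic, and is the only substantial input beyond the St\"ohr--Voloch inequality itself.
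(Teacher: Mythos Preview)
The paper does not prove this lemma at all; its entire proof reads ``See the discussion on~\cite{SV}*{p.~16}.''  Your plan is essentially a reconstruction of that discussion: St\"ohr and Voloch apply their inequality to the $g^2_4$ of lines on a smooth plane quartic, obtain $2q+6$ in the Frobenius-classical case exactly as you compute, and then treat the residual non-classical case for quartics directly, arriving at the Hermitian quartic over $\bF_9$ as the sole exception.

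One remark on your outline: you flag the classification of Frobenius non-classical smooth plane quartics as the ``principal obstacle'' and reach for the later Hefez--Voloch theorem to dispatch it.  That works, but it is somewhat anachronistic, and in fact unnecessary here: St\"ohr and Voloch already carry out this analysis for quartics in the very passage the paper cites, so the obstacle you identify is resolved inside~\cite{SV} itself.  Your handling of the characteristic-three non-classical order sequence $(0,1,3)$ is the right instinct---the bound still depends only on $\nu_1$, not on $\epsilon_2$---and this too is covered by the cited discussion.  In short, your proposal and the paper agree: both defer to~\cite{SV}, with your write-up simply unpacking what the paper leaves as a citation.
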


\begin{proof}
See the discussion on~\cite{SV}*{p.~16}.
\end{proof}

We are ready to embark on a proof of Theorem~\ref{thm:main}.  We begin with a proposition that allows us to deal with many del Pezzo surfaces of degree two endowed with a conic bundle structure over their ground field.

\begin{proposition} \label{conifi}
Let $X$ be a del Pezzo surface of degree two over a finite field $\bF$. Suppose that $X$ admits a conic bundle structure, defined over $\bF$, and that the field $\bF$ has at least four, but not five, elements. Then the conic bundle has a smooth fiber, defined over $\bF$, and in particular, $X$ is unirational.
\end{proposition}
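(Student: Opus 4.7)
The plan is to produce a smooth fiber of the conic bundle $\pi\colon X \to \mathbb{P}^1_\bF$ over an $\bF$-rational point of the base. Every smooth conic over a finite field has a rational point (the Brauer group of $\bF$ being trivial), so such a fiber is isomorphic to $\mathbb{P}^1_\bF$, and its inclusion into $X$ is a non-constant morphism $\mathbb{P}^1 \to X$ defined over $\bF$. Since finite fields are perfect, Theorem~\ref{unirat} then yields unirationality of $X$. The whole argument thus reduces to finding an $\bF$-point of $\mathbb{P}^1$ lying outside the discriminant divisor $\Delta \subset \mathbb{P}^1_\bF$ of $\pi$.

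First I would compute $\deg \Delta$. Geometrically, $X_\Fbar$ has Picard rank $8$, and a standard computation shows that the number of reducible fibers of a conic bundle on a rational surface equals $\rho - 2$; thus $\pi$ has exactly $6$ reducible geometric fibers, each a pair of $(-1)$-curves meeting transversally. No fiber can be of multiplicity $2$: a double component $L$ would give $p_a(L) = 1/2$ by adjunction. Hence $\Delta$ is a reduced effective divisor on $\mathbb{P}^1_\bF$ of degree exactly $6$.

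When $q := |\bF| \geq 7$, we have $|\mathbb{P}^1(\bF)| = q+1 \geq 8 > 6 \geq \#\Delta(\bF)$, and a pigeonhole count supplies a good rational point. The main obstacle is the case $q = 4$, where $|\mathbb{P}^1(\bF_4)| = 5$ is strictly smaller than $\deg \Delta$. Here I would use a Galois argument: since $\Delta$ has only six geometric points and $\mathbb{P}^1(\bF_4)$ has only five, $\Delta$ cannot be supported entirely on $\bF_4$-rational points, so at least one geometric point of $\Delta$ is non-rational; its full Galois orbit (of cardinality at least two) then lies in $\Delta$, forcing $\#\Delta(\bF_4) \leq 4 < 5 = \#\mathbb{P}^1(\bF_4)$ and providing a rational point outside $\Delta$. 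This same counting explains the exclusion of $q = 5$: with $|\mathbb{P}^1(\bF_5)| = 6 = \deg \Delta$, nothing a priori prevents $\Delta$ from exhausting $\mathbb{P}^1(\bF_5)$, and the argument breaks.
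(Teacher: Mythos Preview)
Your proof is correct and follows essentially the same approach as the paper: both count the six singular geometric fibers against $\#\mathbb{P}^1(\bF)$, and for $q=4$ both exploit the Galois-stability of the discriminant to rule out all five rational fibers being singular (your orbit-size argument and the paper's ``the sixth singular fiber would then be rational'' argument are equivalent). You supply more detail than the paper---the Picard-rank computation of $\deg\Delta$, the adjunction argument excluding double fibers, and the Brauer-group remark---but the core idea is the same.
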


\begin{proof}
Let $\pi\colon X \to \bP^1$ be a conic bundle structure defined over $\bF$. The morphism $\pi$ has six distinct singular geometric fibers. If the field $\bF$ has at least $7$ elements, then there are smooth fibers of $\pi$ over $\bP^1(\bF)$. It remains to consider the case when $\bF$ has four elements. In this case there are five fibers over $\bP^1(\bF)$ and they cannot all be singular, since otherwise the remaining geometric singular fiber would be defined over $\bF$ and hence there would be six fibers of $\pi$ defined over $\bF$.  The final statement is a consequence of Theorem~\ref{unirat}.
\end{proof}

\begin{theorem} \label{7fuori}
Suppose that $\mathbb{F}$ is a finite field with at least $7$ elements and that $X$ is a del Pezzo surface of degree two containing a point outside the ramification divisor.  The surface $X$ is unirational.
\end{theorem}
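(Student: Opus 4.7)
The plan is to argue by contradiction: suppose $X$ is not unirational. Then Theorem~\ref{unirat} forbids any exceptional curve of $X$ from being defined over $\mathbb{F}$ (since such a curve is smooth, rational, and has an $\mathbb{F}$-point, hence is isomorphic to $\mathbb{P}^1_\mathbb{F}$); Proposition~\ref{conifi} (using $q \geq 7$, hence $q \neq 5$) forbids any conic bundle structure on $X$ over $\mathbb{F}$; Corollary~\ref{fima} forces every point of $X(\mathbb{F}) \setminus R(\mathbb{F})$ to be a generalized Eckardt point; and the contrapositive of Lemma~\ref{otto} rules out the existence of eight $\mathbb{F}$-rational points of $X$ whose $\kappa$-images are distinct and lie outside $B$.

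Choose a point $p \in X(\mathbb{F}) \setminus R(\mathbb{F})$, which exists by hypothesis. Four exceptional curves $E_1,\ldots,E_4$ pass through $p$, and since the Frobenius fixes $p$ it permutes this set. No Frobenius orbit has size $1$, as that would produce an exceptional curve defined over $\mathbb{F}$. If $\{E, E'\}$ were a Frobenius orbit of size $2$, then, because Geiser-conjugate exceptional curves $(F,\overline{F})$ satisfy $F \cap \overline{F} \subset R$ (their intersection lies above the two tangent points of the common bitangent line to $B$) and $p \notin R$, the curves $E$ and $E'$ would not be Geiser conjugates; hence $E \cdot E' \leq 1$, and together with $p \in E \cap E'$ this forces $E \cdot E' = 1$ with transverse meeting at $p$. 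The class $[E + E']$ would then be Frobenius-stable with $(E+E')^2 = 0$ and $(E+E') \cdot K_X = -2$, so the pencil $|E + E'|$ would endow $X$ with a conic bundle structure over $\mathbb{F}$, also excluded. Therefore every Frobenius orbit on $\{E_1,\ldots,E_4\}$ has size at least three; but $4$ admits no partition into parts of size at least three other than $\{4\}$, so the Frobenius acts as a single $4$-cycle on these four curves, in particular with order divisible by four.

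By the second statement of Lemma~\ref{traces}, $\Tr(\rho(\Fr)) \geq -2$, so by the Lefschetz trace formula $\#X(\mathbb{F}) \geq q^2 + 1 - 2q = (q-1)^2$. Lemma~\ref{stvo} gives $\#R(\mathbb{F}) \leq 2q + 6$ in odd characteristic (the Fermat quartic exception over $\mathbb{F}_9$ only sharpens the ensuing inequality, since there $\#R(\mathbb{F}) = 28$), and in characteristic two $B$ is a plane conic, yielding $\#R(\mathbb{F}) \leq 2q+1$. In every case, for $q \geq 7$,
\[
\#X(\mathbb{F}) - \#R(\mathbb{F}) \geq (q-1)^2 - (2q+6) = q^2 - 4q - 5 \geq 16.
\]
The Geiser involution, defined over $\mathbb{F}$ and with fixed locus $R$, partitions $X(\mathbb{F}) \setminus R(\mathbb{F})$ into orbits of size two with a common $\kappa$-image, so there are at least $(\#X(\mathbb{F})-\#R(\mathbb{F}))/2 \geq 8$ distinct $\kappa$-images of rational points of $X$ off $B$. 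This contradicts the final reduction of the first paragraph and completes the proof.

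The main subtlety is the intersection-theoretic argument in the second paragraph: two exceptional curves through an off-$R$ generalized Eckardt point are never Geiser conjugates and always meet transversely in a single point. This fact is what converts a Galois-stable pair of exceptional curves into an honest conic bundle class over $\mathbb{F}$, and thereby forces the Frobenius to have order divisible by four---which is exactly the hypothesis needed to exploit the sharp trace bound in Lemma~\ref{traces}.
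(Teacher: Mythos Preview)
Your proof is correct and follows essentially the same route as the paper's: reduce to the case where the chosen off-$R$ point is a generalized Eckardt point, rule out Frobenius orbits of size one or two on the four exceptional curves through it (yielding a $4$-cycle), invoke the trace bound from Lemma~\ref{traces} and the St\"ohr--Voloch bound to count at least sixteen off-$R$ points, and apply Lemma~\ref{otto}. You supply more detail than the paper at the conic-bundle step (explicitly arguing that two exceptional curves through an off-$R$ point cannot be Geiser conjugates, hence meet with intersection number one, so their sum is a genuine conic-bundle class), which is a welcome clarification.

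One small expository slip: the Fermat quartic over $\mathbb{F}_9$ has $28$ points, which is \emph{larger} than $2q+6=24$, so it does not ``sharpen'' the inequality---it weakens it. The conclusion survives because $(9-1)^2-28=36\geq 16$, but you should rephrase that sentence.
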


\begin{proof}
Let $p$ be a point outside the ramification divisor.  By Theorem~\ref{ma2}, we reduce to the case in which $p$ is a generalized Eckardt point.  If one of the exceptional curves through $p$ is defined over the ground field $\mathbb{F}$, then the surface is not minimal and we conclude by~\cite{kollar}; if a pair of exceptional curves through $p$ is defined over the ground field $\mathbb{F}$, then the surface $X$ admits a conic bundle over $\mathbb{F}$ and we conclude by Proposition~\ref{conifi}.  Thus we reduce further to the case in which the Galois group acts transitively on the four exceptional curves through $p$.  In particular, the order of the Frobenius element $\Fr^*$ acting on $\Pic(\Xbar) \otimes _\bZ \bC$ is divisible by four, and by Lemma~\ref{traces} we deduce that the trace of $\Fr$ is at least $-2$.  

Combining the estimate on the trace of $\Fr$ with the inequality of Lemma~\ref{stvo}, we find that there are at least $16$ points on $X$ outside the ramification divisor, and we conclude applying Lemma~\ref{otto}.
\end{proof}

\begin{lemma} \label{mas8}
Let $X$ be a del Pezzo surface of degree two over $\bF_7$; if the ramification divisor contains at most $8$ points, then the surface $X$ contains a point not on the ramification divisor.
\end{lemma}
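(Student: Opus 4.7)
The plan is to combine the Lefschetz trace formula with Lemma~\ref{traces} to reduce $\#X(\bF_7)$ to an explicit finite list of values, isolating a single obstructive case, and then to eliminate that case by passing to a quadratic twist and applying an incidence bound coming from Lemma~\ref{lem:quattro}.

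First I would apply the trace formula~\eqref{trfr}: writing $t$ for the trace of Frobenius on $\Pic(\overline{X}) \otimes_{\bZ} \bC$, we obtain $\#X(\bF_7) = 50 + 7 t$, and Lemma~\ref{traces} restricts $t$ to the set $\{-6,-4,-3,-2,-1,0,1,2,3,4,5,6,8\}$, so that $\#X(\bF_7) \in \{8, 22, 29, 36, 43, 50, 57, 64, 71, 78, 85, 92, 106\}$. If $\#X(\bF_7) \geq 22$, then the hypothesis $\#R(\bF_7) \leq 8$ already yields at least $14$ rational points off $R$. Otherwise $\#X(\bF_7) = 8$ and $t = -6$; the seven eigenvalues of Frobenius on $K_X^\perp$ are then roots of unity summing to $-7$, forcing each to equal $-1$, so that Frobenius acts as $-\mathrm{Id}$ on $K_X^\perp$.

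To rule out this remaining case I would pass to the quadratic twist $X^\nu$ of $X$ by a non-square $\nu \in \bF_7$ (explicitly, $\nu w^2 = f$ when $X$ is given by $w^2 = f$). The surfaces $X$ and $X^\nu$ become isomorphic over $\bF_{49}$, and under this identification the Frobenius action on $\Pic(\overline{X^\nu})$ is the composition of the Frobenius on $\Pic(\overline{X})$ with the Geiser involution $\varphi^*$. Since $\varphi^*$ acts as $-\mathrm{Id}$ on $K_X^\perp$, the new Frobenius is the identity on the entire Picard lattice. Consequently $\#X^\nu(\bF_7) = 50 + 7 \cdot 8 = 106$, and every one of the $56$ exceptional curves of $\overline{X^\nu}$ is defined over $\bF_7$; since any smooth genus-zero curve over a finite field is isomorphic to the projective line, each such curve is isomorphic to $\bP^1_{\bF_7}$ and contains exactly eight rational points.

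The decisive step, and main obstacle, is a double count of incidences on $X^\nu$: the $56$ exceptional curves together contribute $56 \cdot 8 = 448$ incidences (exceptional curve, rational point), while Lemma~\ref{lem:quattro} bounds this quantity above by $4 \cdot \#X^\nu(\bF_7) = 4 \cdot 106 = 424$. The strict inequality $448 > 424$ is the sought contradiction, ruling out $\#X(\bF_7) = 8$ and completing the proof.
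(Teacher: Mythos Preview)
Your argument is correct and follows a genuinely different path from the paper's. The paper argues directly with anticanonical curves: among the $57$ lines in $\bP^2(\bF_7)$, at most $28$ are bitangent and at most $\binom{8}{2}=28$ contain two rational branch points, so some line $L$ is neither; then $\kappa^{-1}(L)$ is a geometrically integral curve of arithmetic genus one meeting $R$ in at most one $\bF_7$-point, and Hasse--Weil (or the count for nodal/cuspidal rational curves) furnishes at least two further rational points off $R$.

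Your route instead reduces via the trace formula and Lemma~\ref{traces} to the single case $t=-6$, identifies Frobenius with $-\mathrm{Id}$ on $K_X^\perp$, and eliminates this case by twisting: on $X^\nu$ Frobenius becomes trivial on $\Pic$, so all $56$ exceptional curves are $\bF_7$-rational copies of $\bP^1$, and the incidence count $56\cdot 8 = 448 > 4\cdot 106 = 424$ from Lemma~\ref{lem:quattro} gives a contradiction. The twist computation is right: transporting $\Fr_{X^\nu}$ along the $\bF_{49}$-isomorphism yields $\varphi\circ\Fr_X$, and $\varphi^*$ is indeed $-\mathrm{Id}$ on $K_X^\perp$. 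Note that your contradiction in the $t=-6$ case never invokes the hypothesis $\#R(\bF_7)\leq 8$; you have in fact shown the stronger statement that no del Pezzo surface of degree two over $\bF_7$ has exactly $8$ rational points (equivalently, $\bP^2(\bF_7)$ contains no seven points in general position). Combined with Lemma~\ref{stvo}, this already gives a point off $R$ unconditionally, strengthening Lemma~\ref{mas8}. The paper's approach is more elementary and local, using only line counts and Hasse--Weil; yours is more structural, exploiting the Geiser involution and the global geometry of the exceptional configuration, and yields a sharper conclusion as a byproduct.
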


\begin{proof}
Let $\kappa \colon X \to \mathbb{P}^2$ be the morphism induced by the anticanonical divisor.  Let $C$ be the inverse image of a line $L$ in $\mathbb{P}^2$.  The curve $C$ is a curve of arithmetic genus one on $X$, and the possibilities for $C$ are as follows: 
\begin{enumerate}
\item \label{cali}
a geometrically integral smooth curve of genus one;
\item \label{casin}
a geometrically integral rational curve with exactly one node or one cusp;
\item 
a geometrically reducible curve whose geometric irreducible components are two smooth rational curves.
\end{enumerate}
If the curve $C$ is geometrically integral, then in case~\eqref{cali} it has at least $\lceil (\sqrt{7}-1)^2 \rceil = 3$ points by the Hasse-Weil bound, and in case~\eqref{casin} it has at least $7$ points.  At most $28$ lines in $\mathbb{P}^2$ have geometrically reducible inverse image, and at most $\binom{8}{2}=28$ lines in $\mathbb{P}^2$ contain at least two of the images of the points of the branch divisor of $\kappa$.  Since there are $57$ lines in $\mathbb{P}^2$, it follows that there is a line $L$ in $\mathbb{P}^2$ with the property that the curve $C=\kappa^{-1}(L)$ is geometrically irreducible and  contains at most one $\bF_7$-point on the ramification divisor.  By the previous analysis, we deduce that the curve $C$ contains at least two more points of $X$, that are therefore not contained in the ramification divisor.
\end{proof}

In the following theorem we show unirationality of del Pezzo surfaces of degree two over finite fields with at least $7$ elements.

\begin{theorem} \label{amano}
Let $\mathbb{F}$ be a finite field with at least $7$ elements and let $X$ be a del Pezzo surface of degree two over $\mathbb{F}$, then $X$ is unirational with at most one exception: the field $\mathbb{F}$ is isomorphic to the field $\mathbb{F}_9$ with $9$ elements and $X$ is isomorphic to the del Pezzo surface with equation 
\[
\nu w^2 = x^4 + y^4 + z^4 ,
\]
where $\nu $ is a non-square in $\mathbb{F}_9$.
\end{theorem}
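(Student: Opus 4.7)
The plan is to combine Theorem~\ref{7fuori} with a Lefschetz-trace-formula point count. By Theorem~\ref{7fuori}, it suffices to exhibit a rational point of $X$ outside the ramification divisor $R$; I therefore assume for contradiction that $X(\bF)=R(\bF)$ and aim to derive either an arithmetic contradiction or the stated exceptional configuration.

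Two opposing estimates are the main ingredient. On one side, writing~\eqref{trfr} as $\#X(\bF)=q^2+1+q\Tr\rho(\Fr)$, with $\rho$ the representation of $W(E_7)$ on $\Pic(\Xbar)\otimes\bC$, Lemma~\ref{traces} yields the lower bound $\#X(\bF)\geq q^2+1-6q$. On the other side, Lemma~\ref{stvo} gives $\#R(\bF)\leq 2q+6$ in odd characteristic (with $\#R\leq 28$ only in the Fermat/$\bF_9$ case), while in characteristic two $\kappa$ restricts to a bijection between $R$ and the reduced branch conic, so that $\#R(\bF)\leq 2q+1$. For $q\geq 11$ the inequality $q^2+1-6q>2q+6$ already yields a contradiction.

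The remaining cases $q\in\{7,8,9\}$ need tailored arguments. For $q=7$, Lemma~\ref{mas8} forces $\#R(\bF_7)\geq 9$, hence $\Tr\rho(\Fr)\geq -5$; invoking the arithmetic gap in Lemma~\ref{traces} (the value $-5$ is absent) upgrades this to $\Tr\rho(\Fr)\geq -4$, yielding $\#X(\bF_7)\geq 22>20\geq\#R(\bF_7)$. For $q=9$, tight equality forces $\#X=\#R=28$, $\Tr\rho(\Fr)=-6$, and, by Lemma~\ref{stvo}, the branch curve to be the Fermat quartic; thus $X$ has an equation of the form $\nu w^2=x^4+y^4+z^4$, and a direct count shows that a square $\nu$ gives $154$ rational points, so $\nu$ must be a non-square, pinpointing the exceptional surface in the statement. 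For $q=8$, tight equality forces the reduced branch conic (equation $f=0$ with $f$ the coefficient of $w$ in the normal form $w^2+fw+g=0$) to split as two $\bF_8$-rational lines $\ell_1,\ell_2$; the preimage of each $\ell_i$ in $X$ is cut out by $w^2=g|_{\ell_i}$, and since $\bF_8$ is perfect one can extract $\sqrt{g|_{\ell_i}}$ to build a non-constant $\bF_8$-morphism $\bP^1\to X$ whose image lies in $R$. Theorem~\ref{unirat}, whose characteristic-two clause is available because $\bF_8$ is perfect, then yields unirationality, contradicting the standing assumption.

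The main anticipated obstacle is the $q=8$ case: the characteristic-two geometry of $\kappa$ and of the preimages of components of the branch conic must be handled with some care, and one must exploit perfectness of $\bF_8$ to convert the purely inseparable parametrization by $\sqrt{g|_{\ell_i}}$ into an honest $\bF_8$-rational morphism fed into Theorem~\ref{unirat}.
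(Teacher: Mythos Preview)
Your argument is correct and follows the paper's strategy closely: combine the Lefschetz trace lower bound with the St\"ohr--Voloch (or conic) upper bound on $\#R$, then invoke Theorem~\ref{7fuori}. The $q=7$ and $q=9$ cases match the paper almost verbatim (you just reorder the use of Lemma~\ref{mas8} and the ``$-5$ gap'' from Lemma~\ref{traces}, which is fine). For $q=8$ your route differs slightly from the paper's: instead of casing on the shape of the branch conic and invoking the Eight Points Lemma (Lemma~\ref{otto}) for the conjugate-lines case, you use the point count under $X(\bF)=R(\bF)$ to force $\#R=17$ and hence two $\bF_8$-rational lines, then build a rational curve in $R$. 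That works and in fact avoids Lemma~\ref{otto} entirely; the paper's version has the advantage of not needing the tight-equality analysis.

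One genuine wrinkle in your write-up: your proof-by-contradiction framing breaks down in the $q=8$ case. You assume $X(\bF)=R(\bF)$ and then, for $q=8$, deduce unirationality via Theorem~\ref{unirat}; but unirationality does not contradict $X(\bF)=R(\bF)$, so ``contradicting the standing assumption'' is not accurate. The fix is to reframe the whole argument as: if $X(\bF)\neq R(\bF)$, apply Theorem~\ref{7fuori}; otherwise, for $q\in\{7,9,11,\dots\}$ derive an arithmetic contradiction or land on the exceptional surface, while for $q=8$ prove unirationality directly by a separate mechanism. Also, make explicit that ``extract $\sqrt{g|_{\ell_i}}$'' means precomposing with Frobenius so that $g|_{\ell_i}(t^2)$ becomes a perfect square in $\bF_8[t]$, yielding an honest morphism $\bP^1\to X$; as stated it reads as though $g|_{\ell_i}$ is itself a square polynomial, which it need not be.
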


\begin{proof}
Let $q$ be the number of elements of the field $\mathbb{F}$; we argue separately the three cases $q \geq 9$, $q=8$ and $q=7$.

\subsubsection*{Finite fields with at least 9 elements}
Let $\mathbb{F}$ be a finite field with $q \geq 9$ elements.  Using the Weil conjectures and Lemma~\ref{stvo} we estimate the number of points on $X$: the surface $X$ has at least $q^2-6q+1$ points, while the ramification curve has at most $2q+6$ points, unless the field $\mathbb{F}$ has 9 elements and the branch curve is isomorphic to the Fermat curve.  Apart from the exception, since the inequality $q \geq 9$ holds, there are points not on the ramification curve, and therefore we can apply Theorem~\ref{7fuori} to $X$ and deduce that $X$ is unirational.  In the exceptional case, up to isomorphism there are two possibilities for the double cover and only the one in the statement of the result does not have points outside the ramification curve.

\subsubsection*{The field with 8 elements}

Let $\mathbb{F}_8$ be a finite field with $8$ elements, and let $X$ be a del Pezzo surface of degree two over $\mathbb{F}_8$.  Since the characteristic of the ground field is two, the branch curve $B$ of $X$ is a plane conic.  If the curve $B$ has a geometrically irreducible component defined over the ground field, then we can apply Theorem~\ref{unirat} to obtain that the surface $X$ is unirational.  The only remaining possibility is that the curve $B$ is the union of two lines defined over a quadratic extension of $\mathbb{F}_8$.  In this case, the branch curve $B$ has a unique point defined over $\mathbb{F}_8$, and hence the surface $X$ has at least $8^2-6 \cdot 8 +1 - 1 = 16$ points not on the ramification curve and we can again conclude by applying Lemma~\ref{otto}.

\subsubsection*{The field with 7 elements}
Let $\bF_7$ be a finite field with $7$ elements, and let $X$ be a del Pezzo surface of degree two over $\bF_7$.  We show that the surface $X$ contains a point not on the ramification divisor; this is sufficient, by Theorem~\ref{7fuori}.

Let $\alpha$ be the trace of Frobenius acting on ${\rm H}^2_{\textup{\'et}}(\overline{X},\bQ_\ell)$.  By Lemma~\ref{stvo}, the number of points on the ramification divisor is at most $20$, and therefore the surface $X$ has a point outside of the ramification divisor, provided the inequality $7^2+7\alpha+1>20$ holds, or equivalently if the inequality $\alpha >-4-2/7$ holds.  Using Lemma~\ref{traces}, we deduce that it suffices to analyze the case in which $\alpha = -6$, and hence the number of points of $X$ is $7^2-6 \cdot 7 + 1 = 8$, by the Weil conjectures.  The possibility that the surface $X$ has exactly $8$ points all contained in the ramification divisor is excluded by Lemma~\ref{mas8}.
\end{proof}

\begin{remark}
Recall that a del Pezzo surface $X$ of degree two has at most $126$ generalized Eckardt points (\S\ref{ss:Eckardt}).  Therefore, imposing only that the number of points on $X$ outside the ramification curve exceeds $126$ in the Lefschetz trace formula, we would have obtained the bound $q \geq 16$ for the unirationality of $X$.  Using Lemma~\ref{otto} we reduce the bound from $16$ to $11$.  As we have seen in Theorem~\ref{amano}, there is a surface over the field with $9$ elements all of whose points lie on the ramification curve.
\end{remark}

This is as far as we were able to argue without using a computer.  To complete the proof of Theorem~\ref{thm:main}, we perform direct calculations on a computer, taking advantage of our work above.

\begin{proof}[Proof of Theorem~\ref{thm:main}]
If the field has at least $7$ elements, then the result follows by Theorem~\ref{amano}.  In the four cases in which the field $\mathbb{F}$ has $2,3,4,5$ elements, we use the computer program Magma~\cite{magma} to enumerate and analyze the possible surfaces.

Let $X$ be a del Pezzo surface over a finite field. The surface $X$ always has a point: this follows from an elementary adaptation of the Chevalley-Warning theorem; alternatively, it also follows 
from~\cite{Weil} or~\cite{Esnault}.  If the surface $X$ has a point that is not a generalized Eckardt point and is not contained in the ramification divisor, then the surface $X$ is unirational by Corollary~\ref{fima}.  If the surface $X$ has a point $p$ that is contained in the ramification divisor whose image under $\kappa$ is not contained in a bitangent line to the branch curve, then the surface is again unirational since the spine of $X$ at $p$ is a geometrically integral rational curve defined over the ground field and we can apply Theorem~\ref{unirat}.  Thus, to prove the result, we only need to consider surfaces $X$ whose points are either generalized Eckardt points or lie on the ramification divisor and the corresponding spine is a union of exceptional curves.  Furthermore, since del Pezzo surfaces of degree higher than two with a rational point are unirational, we may also exclude non-minimal del Pezzo surfaces in our considerations.

We now describe how we use a computer to complete the analysis. 
The functions we use are contained in the set of scripts ``{\tt UnirationalityCheck.txt}'' included at the end of the source file in the arXiv posting.

\subsubsection*{Fields $\mathbb{F}_2$ and $\mathbb{F}_4$}

Suppose that the del Pezzo surface $X$ is defined over a finite field $\mathbb{F}$ of characteristic two.  The ramification curve of $X$ is isomorphic to a plane conic, and if it contains a geometrically irreducible component, then we can apply Theorem~\ref{unirat} to conclude.  We reduce to the case in which the branch curve is the union of two conjugate lines defined over the degree two extension of the ground field.  Performing if necessary a linear change of coordinates of $\mathbb{P}^2$, we assume that the surface $X$ has an equation of the form $w^2+(x^2+\alpha xy + y^2) w + g(x,y,z) = 0$, where the polynomial $x^2+\alpha xy + y^2$ is irreducible over $\mathbb{F}$, and $g$ is a quartic form in $x,y,z$.

The function {\tt{Char2OnePt(q)}} first lists the surfaces over $\mathbb{F}_q$ with the following restrictions on the polynomial $g$:
\begin{itemize}
\item
there are no monomial of degree $3$ in $x$, since these terms can be eliminated using a substitution of the form $w \mapsto w+q(x,y,z)$ with $q$ a quadratic form; 
\item
if $c$ is the coefficient of $x^4$ or of $y^4$, then the polynomial $w^2+w+c$ does not admit a root in the ground field, since these roots correspond to points on the surface lying above the point $[1,0,0]$ or $[0,1,0]$.
\end{itemize}
The program then excludes those quartics corresponding to surfaces having a point lying above a point different from $[0,0,1]$ and further eliminates the singular surfaces.  No surface over $\mathbb{F}_2$ or $\mathbb{F}_4$ passes these tests.

We deduce that there are no del Pezzo surface of degree two over the finite fields $\mathbb{F}_2$ and $\mathbb{F}_4$ having only one rational point.  Since the branch curve of the surfaces we consider has only one rational point, it follows that the surface must have a point outside the ramification divisor.  By Theorem~\ref{ma2}, we reduce to the case in which the surface $X$ has a generalized Eckardt point $P$.  As explained in the Appendix, there is a change of coordinates of $\mathbb{P}^2$ fixing the equation of the branch curve and taking the point $\kappa(P) \in \mathbb{P}^2$ to the point $[1,0,0]$.  The function {\tt{CheckEck2(q)}} first lists all the quartics over $\mathbb{F}_q$ corresponding to surfaces with the point $[1,0,0]$ as a generalized Eckardt (see the Appendix); then it excludes the surfaces having at least $16$ points outside the ramification divisor (Lemma~\ref{otto}).  After removing the quartics defining singular surfaces, the program also removes those surfaces having a point outside the ramification divisor that is not a generalized Eckardt point. Finally, we exclude the surfaces for which the spine at the unique rational point of the ramification curve is geometrically irreducible (Theorem~\ref{unirat}). No surface over $\mathbb{F}_2$ or $\mathbb{F}_4$ passes these tests. We conclude that every del Pezzo surface of degree 2 over $\mathbb{F}_2$ or $\mathbb{F}_4$ is unirational.

\subsubsection*{Fields $\mathbb{F}_3$ and $\mathbb{F}_5$}
We use three separate functions to deal with surfaces defined over the fields $\mathbb{F}_3$ and $\mathbb{F}_5$: ``{\tt{CheckEck}}'', ``{\tt{CompileList}}'', ``{\tt{CheckOnlyBranchPoints}}''.

\subsubsection*{{\tt{CheckEck}}}
The function {\tt{CheckEck(q)}} first lists the del Pezzo surfaces of degree two over $\mathbb{F}_q$ of the form 
\[
w^2 = x^4 + Q_2(y,z) x^2 + Q_4(y,z) 
\]
having the point $[1,0,0,1]$ as a generalized Eckardt point.  Second, it excludes the surfaces $X$ whose branch curve contains a point where the tangent line is not a bitangent line: such a line lifts on $X$ to a geometrically integral rational curve, and unirationality follows from Theorem~\ref{unirat}.  Third, it excludes the surfaces admitting a non generalized Eckardt point outside of the ramification divisor: these surfaces are also unirational by Corollary~\ref{fima}.  Among the surfaces that are left, we exclude the ones that are not minimal.

Outcome:
\begin{itemize}
\item
over $\mathbb{F}_3$, no surface passes these tests;
\item
over $\mathbb{F}_5$, up to isomorphism, the only surface passing these tests is the surface 
\[
X/\bF_5 \colon \quad w^2 = x^4 + y^4 + z^4 , \hphantom{\quad \quad}
\]
which is unirational by Lemma~\ref{altra}.
\end{itemize}

\subsubsection*{{\tt{CheckOnlyBranchPoints}}}
The function {\tt CheckOnlyBranchPoints(q)} first lists the del Pezzo surfaces of degree two over $\mathbb{F}_q$ all of whose points project to points of bitangency on the branch curve.  We use the two normal forms determined by Shioda in~\cite{S} for the plane quartics with a point lying on a bitangent line: 
\begin{equation} \label{forno}
\begin{array}{r}
x^2 y^2 + z (x^3 + v_1 x^2 z + v_2 x y z + v_3 x z^2 + v_4 y^2 z + v_5 y z^2 + v_6 z^3 + v_7 y^3) = 0 , \\[5pt]
x^4 + y (z^3 + v_1 y^2 z + v_2 x y z + v_3 x^2 z + v_4 y^3 + v_5 x y^2 + v_6 x^2 y) = 0 .
\end{array}
\end{equation}
For the curves of the first kind, the point $[0,1,0]$ is a simple bitangency point and the corresponding bitangent line is the line with equation $z=0$.  For the curves of the second kind, the point $[0,0,1]$ is a flex bitangency point and the corresponding bitangent line is the line with equation $y=0$.  Each of the plane quartics with defining polynomial $F$ listed in~\eqref{forno} corresponds to two possible del Pezzo surfaces with equations $w^2 = F$ and $\nu w^2 = F$, where $\nu $ is a non-square in the ground field.  The function then excludes the surfaces admitting points outside the ramification curve, or admitting a point on the ramification curve that does not project to a point of bitangency on the branch curve.  Finally, the function excludes surfaces that are not minimal, keeping track of the remaining surfaces up to isomorphism.

Outcome:
\begin{itemize}
\item
over $\mathbb{F}_5$, there is no surface in the output of {\tt CheckOnlyBranchPoints(5)};
\item
over $\mathbb{F}_3$, up to isomorphism, the only two surfaces passing the tests contained in {\tt{CheckOnlyBranchPoints(3)}} are the surfaces 
\[
\begin{array}{lrcl}
X_1/\bF_3 \colon \quad & -w^2 & = & (x^2 + y^2)^2 + y^3z - yz^3, \\[5pt]
X_2/\bF_3 \colon \quad & -w^2 & = & x^4 + y^3z - yz^3.
\end{array}
\]
These are the two exceptions mentioned in the statement.\qed
\end{itemize}
\hideqed
\end{proof}

\begin{remark}
We computed the Galois cohomology group ${\rm H}^1\left(\Gal(\overline{\mathbb{F}} / \mathbb{F}), \Pic (\overline{X})\right)$ for the three exceptional surfaces of Theorem~\ref{thm:main}; the exponent of this group gives a lower bound for the degree of a dominant rational map $\mathbb{P}^2 \dashrightarrow X$ \cite[Theorem~29.2]{M}.  We obtained 
\begin{align*}
{\rm H}^1\left( \Gal(\overline{\mathbb{F}}_3 / \mathbb{F}_3) , \Pic (\overline{X}_1)\right) &\cong (\mathbb{Z}/2\mathbb{Z})^4, \\
{\rm H}^1\left( \Gal(\overline{\mathbb{F}}_3 / \mathbb{F}_3) , \Pic (\overline{X}_2)\right) &\cong (\mathbb{Z}/4\mathbb{Z})^2, \\
{\rm H}^1\left( \Gal(\overline{\mathbb{F}}_9 / \mathbb{F}_9) , \Pic (\overline{X}_3)\right) &\cong (\mathbb{Z}/2\mathbb{Z})^6.
\end{align*}
The Galois actions giving rise to these groups are quite special.  For example, for each of the groups $G \in \left\{(\mathbb{Z}/4\mathbb{Z})^2, (\mathbb{Z}/2\mathbb{Z})^6\right\}$, there is a unique conjugacy class of subgroups of $W(E_7)$ such that the corresponding Galois cohomology group is isomorphic to $G$.
\end{remark}

\begin{proof}[Proof of Corollary \ref{quadratica}]
Suppose first that the field $k$ is infinite. Then there is a point $P \in \mathbb{P}^2(k)$ that is contained neither on the branch locus of $\kappa$ nor on four bitangent lines. Passing at most to a quadratic extension $k'$ of $k$, we obtain that the point $P$ lifts to the surface $X \times_{k} k'$, and hence $X \times_{k} k'$ is unirational by Corollary~\ref{fima}.

Suppose now that the field $k$ is finite.  We begin by showing that there is a point $P \in \mathbb{P}^2(k)$ that is not contained in the branch curve $B$ of $\kappa$.  This is clear if the characteristic of the field $k$ is two, since in this case the curve $B$ is a conic.  If the characteristic of the field $k$ is odd, then we reduce to the case in which the curve $B$ contains a point $Q$.  The intersection of $B$ with tangent line to $B$ at $Q$ is a finite scheme of length four with $Q$ as a non-reduced point; in particular this intersection consists of at most $3$ reduced points over $k$.  Since the number of points on the tangent line to the curve $B$ at $Q$ contains at least four points, we deduce that there is indeed a point $P$ on $\mathbb{P}^2(k)$ not contained in the branch curve.  Over a quadratic extension $k'/k$ the point $P$ lifts to a point on $X \times_k k'$ outside the ramification divisor and we conclude applying Theorem~\ref{7fuori}, unless $k$ is the field $\mathbb{F}_2$.

Finally suppose that $k$ is $\mathbb{F}_2$.  The ramification curve acquires a geometrically integral component over at most a quadratic extension of the field $k$; in this case we conclude applying Theorem~\ref{unirat}.
\end{proof}

\section*{Appendix: generalized Eckardt points over fields of characteristic two}

Let $k$ be a field of characteristic two. Let $X$ be a del Pezzo surface of degree $2$ over $k$.  The surface $X$ is isomorphic to a hypersurface of degree $4$ in the weighted projective space $\bP(1,1,1,2)$ with equation 
\[
X \colon \quad w^2 + wf(x,y,z) + g(x,y,z) = 0,
\]
where $f(x,y,z)$ and $g(x,y,z)$ are homogeneous polynomials in $k[x,y,z]$ of respective degrees two and four.  The anticanonical morphism $\kappa\colon X \to \bP^2$ is obtained by forgetting the coordinate $w$ and an equation of the branch locus of the anticanonical morphism is $f(x,y,z)^2 = 0$.

Exceptional curves on $X$ are divisors of the form
\[
\left\{
\begin{array}{rcl}
\alpha(x,y,z) &=& w\\
\ell(x,y,z) &=& 0
\end{array}
\right\},
\]
where $\alpha(x,y,z)$ and $\ell(x,y,z)$ are forms in $k[x,y,z]$ of respective degrees two and one. After a linear change of variables, we assume that the coordinates of the generalized Eckardt point $P$ are $[1,0,0,w_0]$, so its projection to $\bP^2$ is $[1,0,0]$. To say $P$ is a generalized Eckardt point means that there are four members of the pencil of lines $\lambda y + \mu z = 0$ through $\kappa(P)$ that give rise to exceptional curves in $X$. On the open chart where $\mu = 1$, we let 
\[
\begin{array}{ccccl}
\alpha^{(\lambda)}(x,y) & = & \alpha(x,y,\lambda y) &=& \alpha_0 x^2 + \alpha_1 xy + \alpha_2 y^2, \\[5pt]
f^{(\lambda)}(x,y) & = & f(x,y,\lambda y) & = & f_0 x^2 + f_1 xy + f_2 y^2, \\[5pt]
g^{(\lambda)}(x,y) & = & g(x,y,\lambda y) & = & g_0 x^4 + g_1 x^3y + g_2 x^2y^2 + g_3 xy^3 + g_4 y^4,
\end{array}
\]
where the coefficients of the monomials in $x,y$ are polynomials in $\lambda$.  The line $z = \lambda y$ is a bitangent line if the polynomial
\[
\alpha^{(\lambda)}(x,y)^2 + \alpha^{(\lambda)}(x,y)f^{(\lambda)}(x,y) + g^{(\lambda)}(x,y)
\]
vanishes identically, as a polynomial in $x$ and $y$.  Thus we obtain the system of equations 
\begin{equation} \label{cona0}
\left\{
\begin{array}{rcl}
\alpha_0^2 + \alpha_0f_0 + g_0 &=& 0 \\[5pt]
\alpha_0f_1 + \alpha_1f_0 + g_1 &=& 0 \\[5pt]
\alpha_1^2 + \alpha_0f_2 + \alpha_1f_1 + \alpha_2f_0 + g_2 &=& 0\\[5pt]
\alpha_1f_2 + \alpha_2f_1 + g_3 &=& 0\\[5pt]
\alpha_2^2 + \alpha_2f_2 + g_4 &=& 0 .
\end{array}
\right.
\end{equation}
We use the relations $\alpha_0f_1 = \alpha_1f_0 + g_1$ and $\alpha_2f_1 = \alpha_1f_2 + g_3$ to eliminate $\alpha_0,\alpha_2$ from the first, third and fifth equations in~\eqref{cona0}, obtaining the system 
\begin{equation} \label{quadra}
\left\{
\begin{array}{rcl}
\alpha_1^2f_0^2 + \alpha_1f_0^2f_1 + (g_1^2 + f_0f_1g_1 + f_1^2g_0) & = & 0\\[5pt]
\alpha_1^2f_1 + \alpha_1f_1^2 + (f_0g_3 + f_1g_2 + f_2g_1) & = & 0 \\[5pt]
\alpha_1^2f_2^2 + \alpha_1f_1f_2^2 + (f_1^2g_4 + f_1f_2g_3 + g_3^2) & = & 0.
\end{array}
\right.
\end{equation}
Define 
\[
\begin{array}{rcl}
e_0 & := & g_1^2+f_0f_1g_1+f_1^2g_0 \\[5pt]
e_1 & := & f_0g_3 + f_1g_2 + f_2g_1 \\[5pt]
e_2 & := & f_1^2g_4 + f_1f_2g_3 + g_3^2 ;
\end{array}
\]
viewing the equations in~\eqref{quadra} as quadratic in $\alpha_1$, we find that the relations
\begin{equation}
\label{eq:rels}
e_0f_2^2 - e_2f_0^2 \; = \; e_0f_1 - e_1f_0^2 \; = \; e_1f_2^2 - e_2f_1 \; = \; 0
\end{equation}
hold.  Write
\[
\begin{split}
g(x,y,z) &= a_{1}x^4 + a_{2}x^3y + a_{3}x^3z + a_{4}x^2y^2 + a_{5}x^2yz + a_{6}x^2z^2 + a_{7}xy^3 +  \\
&\quad a_{8}xy^2z + a_{9}xyz^2 + a_{10}xz^3 + a_{11}y^4 + a_{12}y^3z + a_{13}y^2z^2 + a_{14}yz^3 + a_{15}z^4
\end{split}
\]
and
\[
f(x,y,z) = a_{16}x^2 + a_{17}xy + a_{18}xz + a_{19}y^2 + a_{20}yz + a_{21}z^2 ;
\]
we express the relations~\eqref{eq:rels} in terms of $a_1,\ldots,a_{21}$ and $\lambda$.  The relation $e_0f_1 - e_1f_0^2 = 0$ has degree $3$ as a polynomial in $\lambda$, and if $P$ is a generalized Eckardt point, then every coefficient of this polynomial must be zero. We find the relations 
\begin{equation} \label{tuttea}
\left\{
\begin{array}{rcl}
a_{7}a_{16}^3 & = & a_{1}a_{17}^3 + a_{2}^2a_{17} + a_{2}a_{16}^2a_{19} + a_{2}a_{16}a_{17}^2 + a_{4}a_{16}^2a_{17} \\[5pt]
a_{8}a_{16}^3 & = & a_{1}a_{17}^2a_{18} + a_{2}^2a_{18} + a_{2}a_{16}^2a_{20} + a_{3}a_{16}^2a_{19} + a_{3}a_{16}a_{17}^2 + \\[3pt]
& & a_{4}a_{16}^2a_{18} + a_{5}a_{16}^2a_{17} \\[5pt]
a_{9}a_{16}^3 & = & a_{1}a_{17}a_{18}^2 + a_{2}a_{16}^2a_{21} + a_{2}a_{16}a_{18}^2 + a_{3}^2a_{17} + a_{3}a_{16}^2a_{20} + \\[3pt]
& & a_{5}a_{16}^2a_{18} + a_{6}a_{16}^2a_{17} \\[5pt]
a_{10}a_{16}^3 & = & a_{1}a_{18}^3 + a_{3}^2a_{18} + a_{3}a_{16}^2a_{21} + a_{3}a_{16}a_{18}^2 + a_{6}a_{16}^2a_{18} .
\end{array}
\right.
\end{equation}
Note that these relations are linear in the variables $a_7,\dots,a_{10}$. We normalize the equation of $X$ so that $a_{16} = 1$ always, and hence the coefficients of $xy^3$, $xy^2z$, $xyz^2$ and $xz^3$ are determined by the remaining coefficients.

We specialize the discussion above and use the automorphism group of $\mathbb{P}^2$ to reduce the possibilities for the branch curve.  Up to a linear change of coordinates, the polynomial $f$ is proportional to a polynomial in the list 
\begin{equation} \label{effe}
\begin{array}{rl}
    x^2 + xy + z^2 & \quad \textup{(smooth conic),} \\
\hphantom{irreducible}    x^2 + \alpha xy + y^2 & \quad \textup{(irreducible, geometrically reducible conic),} \\
    x^2 + xy & \quad \textup{(reducible conic),} \\
    x^2 & \quad \textup{(non-reduced conic),}
\end{array}
\end{equation}
where $\alpha$ is an element of $k$ such that $\alpha \notin \{t^2 + t : t \in k\}$.  In the case of a smooth conic, the automorphisms of $\bP^2$ that stabilize the conic act on the points of $\mathbb{P}^2$ outside the conic with two orbits: the strange point of the conic (the point contained in every tangent line of the conic) and everything else.  Since the point $[1,0,0]$ is not the strange point of $x^2 + xy + z^2 = 0$, we may perform the transformation of $f$ without perturbing our desired Eckardt point.  In the remaining cases, the group of automorphisms of $\mathbb{P}^2$ stabilizing the vanishing set of $f$ act transitively on the complement of $V(f)$.  Taking advantage of the possibility of rescaling $w$ and renormalizing the equation of $X$, we reduce to the case in which the polynomial $f$ is one of the polynomials in~\eqref{effe} and the coordinates of the point $\kappa(P)$ are $[1,0,0]$.

\begin{example}[Smooth conic]
If $f(x,y,z) = x^2 + xy + z^2$, then the equations~\eqref{tuttea} specialize~to 
\[
\left\{
\begin{array}{rcl}
a_{7} & = & a_{1} + a_{2}^2 + a_{2} + a_{4} \\
a_{8} & = & a_{3} + a_{5} \\
a_{9} & = & a_{2} + a_{3}^2 + a_{6} \\
a_{10} & = & a_{3} .
\end{array}
\right.
\]
\end{example}

\begin{example}[Irreducible, geometrically reducible conic]
If $f(x,y,z) = x^2 + \alpha xy + y^2$, then the equations~\eqref{tuttea} specialize to 
\[
\left\{
\begin{array}{rcl}
a_{7} & = & \alpha^3 a_{1} + \alpha a_{2}^2 + a_{2} + \alpha^2 a_{2} + \alpha a_{4} \\
a_{8} & = & a_{3} + \alpha^2 a_{3} + \alpha a_{5} \\
a_{9} & = & \alpha a_{3}^2 + \alpha a_6 \\
a_{10} & = & 0 .
\end{array}
\right.
\]
\end{example}

\begin{example}[Reducible conic]
If $f(x,y,z) = x^2 + xy$, then the equations~\eqref{tuttea} specialize~to 
\[
\left\{
\begin{array}{rcl}
    a_7 & = & a_{1} + a_{2}^2 + a_{2} + a_{4}\\
    a_8 & = & a_{3} + a_{5}\\
    a_9 & = & a_{3}^2 + a_{6}\\
    a_{10} & = & 0.
\end{array}
\right.
\]
\end{example}

\begin{example}[Double line]
If $f(x,y,z) = x^2$, then the equations~\eqref{tuttea} specialize to 
\[
\left\{
\begin{array}{rcl}
    a_7 & = & 0\\
    a_8 & = & 0\\
    a_9 & = & 0\\
    a_{10} & = & 0.
\end{array}
\right.
\]
\end{example}

\begin{bibdiv}
\begin{biblist}

\bib{A}{article} {
    AUTHOR = {Artin, M.},
     TITLE = {Supersingular {$K3$} surfaces},
   JOURNAL = {Ann. Sci. \'Ecole Norm. Sup. (4)},
    VOLUME = {7},
      YEAR = {1974},
     PAGES = {543--567 (1975)}
}

\bib{AM}{article}{
    AUTHOR = {Artin, M.},
    AUTHOR = {Mumford, D.},
     TITLE = {Some elementary examples of unirational varieties which are
              not rational},
   JOURNAL = {Proc. London Math. Soc. (3)},
    VOLUME = {25},
      YEAR = {1972},
     PAGES = {75--95}
}

\bib{magma}{article}{
    AUTHOR = {Bosma, Wieb},
    AUTHOR={Cannon, John},
    AUTHOR={ Playoust, Catherine},
     TITLE = {The {M}agma algebra system. {I}. {T}he user language},
      NOTE = {Computational algebra and number theory (London, 1993)},
   JOURNAL = {J. Symbolic Comput.},
    VOLUME = {24},
      YEAR = {1997},
    NUMBER = {3-4},
     PAGES = {235--265},
}

\bib{CG}{article}{
    AUTHOR = {Clemens, C. Herbert},
    AUTHOR = {Griffiths, Phillip A.},
     TITLE = {The intermediate {J}acobian of the cubic threefold},
   JOURNAL = {Ann. of Math. (2)},
    VOLUME = {95},
      YEAR = {1972},
     PAGES = {281--356}
}

\bib{atlas}{book}{
   author={Conway, J. H.},
   author={Curtis, R. T.},
   author={Norton, S. P.},
   author={Parker, R. A.},
   author={Wilson, R. A.},
   title={Atlas of finite groups},
   note={Maximal subgroups and ordinary characters for simple groups;
   With computational assistance from J. G. Thackray},
   publisher={Oxford University Press},
   place={Eynsham},
   date={1985},
   pages={xxxiv+252},
   isbn={0-19-853199-0},
   review={\MR{827219 (88g:20025)}},
}

\bib{Dolgachev}{book}{
    AUTHOR = {Dolgachev, Igor V.},
     TITLE = {Classical algebraic geometry},
      NOTE = {A modern view},
 PUBLISHER = {Cambridge University Press},
   ADDRESS = {Cambridge},
      YEAR = {2012}
}

\bib{Esnault}{article}{
   AUTHOR={Esnault, H{\'e}l{\`e}ne},
   TITLE={Varieties over a finite field with trivial Chow group of 0-cycles
   have a rational point},
   JOURNAL={Invent. Math.},
   VOLUME={151},
   YEAR={2003},
   number={1},
   pages={187--191}
}

\bib{I}{article} {
    AUTHOR = {Iskovskih, V. A.},
     TITLE = {Minimal models of rational surfaces over arbitrary fields},
   JOURNAL = {Izv. Akad. Nauk SSSR Ser. Mat.},
    VOLUME = {43},
      YEAR = {1979},
    NUMBER = {1},
     PAGES = {19--43, 237}
}

\bib{IM}{article} {
    AUTHOR = {Iskovskih, V. A.},
    AUTHOR = {Manin, Ju. I.},
     TITLE = {Three-dimensional quartics and counterexamples to the
              {L}\"uroth problem},
   JOURNAL = {Mat. Sb. (N.S.)},
    VOLUME = {86(128)},
      YEAR = {1971},
     PAGES = {140--166}
}

\bib{Ko}{book}{
    AUTHOR = {Koll{\'a}r, J{\'a}nos},
     TITLE = {Rational curves on algebraic varieties},
    SERIES = {Ergebnisse der Mathematik und ihrer Grenzgebiete. 3. Folge. A
              Series of Modern Surveys in Mathematics [Results in
              Mathematics and Related Areas. 3rd Series. A Series of Modern
              Surveys in Mathematics]},
    VOLUME = {32},
 PUBLISHER = {Springer-Verlag},
   ADDRESS = {Berlin},
      YEAR = {1996},
     PAGES = {viii+320}
}

\bib{kollar}{article}{
    AUTHOR = {Koll{\'a}r, J{\'a}nos},
     TITLE = {Unirationality of cubic hypersurfaces},
   JOURNAL = {J. Inst. Math. Jussieu},
    VOLUME = {1},
      YEAR = {2002},
    NUMBER = {3},
     PAGES = {467--476}
}
		
\bib{Shuijing}{book}{
   author={Li, Shuijing},
   title={Rational points on del Pezzo surfaces of degree 1 and 2},
   note={Thesis (Ph.D.)--Rice University},
   publisher={ProQuest LLC, Ann Arbor, MI},
   date={2010},
   pages={78},
   isbn={978-1124-77278-3},
   review={\MR{2912153}},
}

\bib{M}{book} {
    AUTHOR = {Manin, Yu. I.},
     TITLE = {Cubic forms},
    SERIES = {North-Holland Mathematical Library},
    VOLUME = {4},
   EDITION = {Second},
      NOTE = {Algebra, geometry, arithmetic,
              Translated from the Russian by M. Hazewinkel},
 PUBLISHER = {North-Holland Publishing Co.},
   ADDRESS = {Amsterdam},
      YEAR = {1986},
     PAGES = {x+326}
}

\bib{S}{incollection} {
    AUTHOR = {Shioda, Tetsuji},
     TITLE = {Algebraic cycles on certain {$K3$} surfaces in
              characteristic {$p$}},
 BOOKTITLE = {Manifolds--{T}okyo 1973 ({P}roc. {I}nternat. {C}onf., {T}okyo,
              1973)},
     PAGES = {357--364},
 PUBLISHER = {Univ. Tokyo Press},
   ADDRESS = {Tokyo},
      YEAR = {1975}
}

\bib{Shioda}{article}{
   author={Shioda, Tetsuji},
   title={An example of unirational surfaces in characteristic $p$},
   journal={Math. Ann.},
   volume={211},
   date={1974},
   pages={233--236}
}

\bib{SV}{article}{
    AUTHOR = {St{\"o}hr, Karl-Otto},
    AUTHOR = {Voloch, Jos{\'e} Felipe},
     TITLE = {Weierstrass points and curves over finite fields},
   JOURNAL = {Proc. London Math. Soc. (3)},
    VOLUME = {52},
      YEAR = {1986},
    NUMBER = {1},
     PAGES = {1--19}
}

\bib{TVAV}{article}{
   author={Testa, Damiano},
   author={V{\'a}rilly-Alvarado, Anthony},
   author={Velasco, Mauricio},
   title={Cox rings of degree one del Pezzo surfaces},
   journal={Algebra Number Theory},
   volume={3},
   date={2009},
   number={7},
   pages={729--761}
}

\bib{Weil}{article}{
   author={Weil, Andr{\'e}},
   title={Abstract versus classical algebraic geometry},
   conference={
      title={Proceedings of the International Congress of Mathematicians,
      1954, Amsterdam, vol. III},
   },
   book={
      publisher={Erven P. Noordhoff N.V., Groningen},
   },
   date={1956},
   pages={550--558}
}

\end{biblist}
\end{bibdiv}

\end{document}